\newtheorem{property}{Property}
\newtheorem{theorem}{Theorem}
\begin{document}
	
\title{Accelerated Tensor Completion via Trace-Regularized Fully-Connected Tensor Network}
	
\author{Wenchao Xie\thanks{School of Mathematics and Computational Science, Xiangtan University, Xiangtan, 411105, China. Email: youmengx@hotmail.com}, \hskip 0.2cm
Qingsong Wang\thanks{School of Mathematics and Computational Science, Xiangtan University, Xiangtan, 411105, China. Email: nothing2wang@hotmail.com}, \hskip 0.2cm
Chengcheng Yan \thanks{School of Mathematics and Computational Science, Xiangtan University, Xiangtan, 411105, China. Email: ycc956176796@gmail.com},  \hskip 0.2cm
Zheng Peng\thanks{Corresponding author.School of Mathematics and Computational Science, Xiangtan University, Xiangtan, 411105, China. Email: pzheng@xtu.edu.cn} 
}
	
\date{}	
\maketitle
\vspace{-15pt}
\begin{abstract}
The fully-connected tensor network (FCTN) decomposition has gained prominence in the field of tensor completion owing to its powerful capacity to capture the low-rank characteristics of tensors. Nevertheless, the recovery of local details in the reconstructed tensor still leaves scope for enhancement. In this paper, we propose efficient tensor completion model that incorporates trace regularization within the FCTN decomposition framework. The trace regularization is constructed based on the mode-$k$ unfolding of the FCTN factors combined with periodically modified negative laplacian. The trace regularization promotes the smoothness of the FCTN factors through discrete second-order derivative penalties, thereby enhancing the continuity and local recovery performance of the reconstructed tensor. To solve the proposed model, we develop an efficient algorithm within the proximal alternating minimization (PAM) framework and theoretically prove its convergence.  To reduce the runtime of the proposed algorithm, we design an intermediate tensor reuse mechanism that can decrease runtime by 10\%–30\% without affecting image recovery, with more significant improvements for larger-scale data. A comprehensive complexity analysis reveals that the mechanism attains a reduced computational complexity. Numerical experiments demonstrate that the proposed method outperforms existing approaches.
\end{abstract}
	
\begin{keywords}
Tensor decomposition, Tensor completion, Negative laplacian regularization, Image processing, Proximal alternating minimization.
\end{keywords}
	
\maketitle
	
\section{Introduction}
As a natural high-dimensional generalization of a matrix, the tensor provides a powerful framework for representing complex data structures and has been widely applied in various fields, such as image processing \cite{Cichocki2015Tensor, Du2017PLTD}, hyperspectral data recovery \cite{Li2010Tensor, Xing2012Dictionary}, machine learning \cite{Li2015, Yang2024Spectral, Sidiropoulos2017}, and face recognition \cite{Attouch2013, Hao2013}, among others \cite{He2020, Kilmer2021, Miao2021, Zhang2022Sparse, Zhao2020}. Among its many applications, high-dimensional image completion not only possesses considerable theoretical significance but also offers extensive practical potential across a wide range of fields, e.g \cite{Li2021a, Li2020b, Zhang2021, Zhao2021}. In real-world scenarios, high-dimensional image data are frequently subject to corruption or partial loss due to various factors, such as environmental disturbances, sensor noise, or equipment malfunctions, which can severely compromise the usability and reliability of the data. Tensor completion techniques are designed to recover or reconstruct the full underlying data from partially observed or incomplete measurements, providing a systematic approach to infer missing entries and restore the integrity of high-dimensional datasets.
 Given that high-dimensional images can naturally be represented as tensors, the application of tensor completion methods provides a powerful and effective approach for recovering missing or corrupted information, thereby enhancing the quality, integrity, and applicability of image data in practical tasks. The tensor completion model is usually expressed in the following form
$$
\begin{aligned}
\min_{\mathcal{X}}&\quad\text{rank}(\mathcal{X}),\\\mathrm{s.t.}&\quad\mathcal{P}_{\Omega}(\mathcal{X}-\mathcal{Y})=0,
\end{aligned}
$$
where $\mathcal{X}$ is target tensor, $\mathcal{Y}$ is partially observed tensor, $\Omega$ denotes the index set of the observed elements and $\mathcal{P}_{\Omega}(\mathcal{X})$  is a projection operator that preserves the entries corresponding to the set $\Omega$ while setting all other entries to zero.

Unfortunately, the rank minimization problem is an NP-hard problem, as its objective function rank(·) is discrete and non-convex. Consequently, two types of tensor completion methods, which are low-rank regularization and  based on tensor decomposition  respectively, have been developed on the basis of the low-rank assumption of tensor. Low-rank regularization methods achieve rank minimization by imposing convex surrogate functions of low-rankness on the tensor structure, as exemplified by Zhang et al. \cite{Zhang2014}, who proposed a convex surrogate function to replace direct rank minimization. Tensor decomposition expresses the original tensor as a collection of factor tensors, and by imposing rank constraints on these factors, it effectively achieves rank minimization. Common ones based on tensor decomposition include the CANDECOMP/PARAFAC (CP) decomposition \cite{TG, xie2025efficient}, the Tucker decomposition \cite{Bengua2017Efficient, Liu2013Tensor}, the tensor singular value (SVD) decomposition \cite{Grasedyck2010, Song2020, Zhang2021DeepPrior, Zhang2017Exact}, tensor train (TT) decomposition \cite{oseledets2011tensor, Oseledets2009}, the tensor ring (TR) decomposition \cite{Chen2022FSTRD, Qin2024TRDAnomaly, zhao2016tensor}, and the fully-connected tensor network (FCTN) decomposition \cite{Zheng2022, Zheng2021}. Among these tensor decompositions, tensor train (TT), tensor ring (TR), and the fully-connected tensor network (FCTN) have attracted significant attention in the field of tensor completion. The TT decomposition represents an $N$th-order tensor $\mathcal{X}\in \mathbb{R}^{I_1\times I_2\times \cdots I_N }$ using two matrices $\mathbf{A}_1\in \mathbb{R}^{I_1\times R_1}$ and $\mathbf{A}_N \in \mathbb{R}^{R_{N-1}\times I_N}$ , and $N$-2 third-order tensors $\mathcal{A}_{k} \in \mathbb{R}^{R_{k-1}\times I_{k} \times R_k} $ for $k=2,3,\cdots,N-1$. The corresponding relationship for each element of tensor $\mathcal{X}$ in the TT decomposition is as follows
$$
\mathcal{X}\left(i_{1}, i_{2}, \ldots, i_{N}\right)=\sum_{r_{1}=1}^{R_{1}} \sum_{r_{2}=1}^{R_{2}} \cdots \sum_{r_{N-1}=1}^{R_{N-1}} \mathbf{A}_{1}\left(i_{1}, r_{1}\right) \mathcal{A}_{2}\left(r_{1}, i_{2}, r_{2}\right) \ldots \mathbf{A}_{N}\left(r_{N-1}, i_{N}\right).
$$
The rank of the TT is defined as $\mathbf{r}_{\text{TT}}=(R_{1}, R_2, \cdots, R_{N-1})\in \mathbb{R}^{N-1}$. The TR decomposition represents an $N$th-order tensor using $N$ third-order tensors, and each element of the represented tensor has the following corresponding relationship
$$
\mathcal{X}\left(i_{1}, i_{2}, \ldots, i_{N}\right)=\sum_{r_{1}=1}^{R_{1}} \sum_{r_{2}=1}^{R_{2}} \cdots \sum_{r_{N-1}=1}^{R_{N}} \mathcal{A}_{1}\left(r_{N}, i_{1}, r_1\right) \mathcal{A}_{2}\left(r_{1}, i_{2}, r_{2}\right) \ldots \mathcal{A}_{N}\left(r_{N-1}, i_{N},r_{N}\right),
$$
where $\mathcal{A}_{1}\left(r_{N}, i_{1}, r_1\right)\in \mathbb{R}^{R_N \times I_1 \times R_1}$ and $\mathcal{A}_{k}\left(r_{N}, i_{1}, r_1\right)\in \mathbb{R}^{R_{k-1} \times I_k \times R_k}$ for $k=2,3,\cdots,N$ are TR factors. The rank of the TR is defined as $\mathbf{r}_{\text{TR}}=(R_{1}, R_2, \cdots, R_{N-1})\in \mathbb{R}^{N}$. Both the tensor completion method proposed by Bengua et al. \cite{Bengua2017Efficient} based on TT decomposition and that proposed by Yuan et al. \cite{yuan2019tensor} based on tensor ring have achieved satisfactory results. However, the TT and TR decompositions can only characterize the correlation between two adjacent modes in a tensor and are unable to keep the invariance for tensor transposition. 

To address the limitations of TT and TR decompositions, Zheng et al. \cite{Zheng2021} proposed a fully-connected tensor network (FCTN) decomposition. The FCTN decomposition formulates an  $N$th order tensor $\mathcal{X} \in \mathbb{R}^{I_{1} \times I_{2} \times \cdots \times I_{N}}$ as a multilinear product of  $N$  factor tensors. The element-wise correspondence of the tensor  $\mathcal{X}$  as
$$
\begin{aligned}
\mathcal{X}\left(i_{1}, i_{2}, \ldots, i_{N}\right)= & \sum_{r_{1,2}=1}^{R_{12}} \ldots \sum_{r_{1, N-1}=1}^{R_{1, N-1}} \sum_{r_{1, N}=1}^{R_{1, N}} \sum_{r_{2,3}=1}^{R_{23}} \ldots \sum_{r_{2, N-1}}^{R_{1,N-1}} \sum_{r_{2, M}=1}^{R_{2, N}} \ldots \sum_{r_{N-2, N}=1}^{R_{N-2, M}} \sum_{r_{N-1, N}=1}^{R_{N-1, N}} \\
&\left(\mathcal{A}_{1}\left(i_{1}, r_{1,2}, \ldots, r_{1, N-1}, r_{1, N}\right)\right. \mathcal{A}_{2}\left(r_{1,2}, i_{2}, \ldots, r_{2, N-1}, r_{2, N}\right) \ldots \\
& \mathcal{A}_{k}\left(r_{1, k}, \ldots, r_{k-1, k}, i_{k}, r_{k, k+1}, \ldots, r_{k, N}\right) \ldots \left.\mathcal{A}_{N}\left(r_{1, N}, r_{2, N}, \ldots, r_{N-1, N}, i_{N}\right)\right) ,
\end{aligned}
$$
where $\mathcal{A}_{k} \in \mathbb{R}^{R_{1, k} \times R_{2, k} \times \cdots \times R_{k-1, k} \times I_{k} \times R_{k, k+1} \times \cdots \times R_{k, N}}$ for $k=1,2,\cdots,N$ are FCTN factors. The rank of the FCTN is defined as  $\mathbf{r}_{\text{FCTN}}=(R_{1,2}, R_{1,3}, \ldots, R_{1, N},R_{2,3} , \left.R_{2,4}, \ldots, R_{2, N}, \ldots, R_{N-1, N}\right) \in \mathbb{R}^{\frac{N(N-1)}{2}}$. According to the FCTN decomposition, Zhao et al. \cite{Zheng2022} proposed a tensor completion method that combines FCTN decomposition with a regularization factor, achieving extraordinary results. However, although the model combines global low-rankness with local continuity, it still struggles to recover fine local details. In particular, at low sampling rates, the reconstructed images exhibit noticeable local noise. Moreover, the model fails to fully exploit the intermediate information generated during iterative optimization, which leads to increased computational time.

Therefore, to address the aforementioned issues, this paper proposes an innovative method based on FCTN decomposition, named AFCTNLR. This method incorporates trace regularization to enhance the smoothness of FCTN factors, thereby improving the recovery of local details in the reconstructed tensor. In addition, we fully exploit the intermediate tensors generated during the algorithm’s iterations, effectively reducing the runtime. Overall, this paper makes three main contributions.
\begin{itemize}
    \item  \textbf{Streamlined and effective trace regularized tensor completion model.} We propose a novel trace regularized tensor completion model grounded in the FCTN decomposition framework. This model ensures the global low-rank property of the tensor while enhancing the smoothness of tensor factors via the perturbed laplacian trace regularization, thereby strengthening the continuity of the reconstructed tensor and improving the recovery of local details.
    \item \textbf{Lower computational complexity.} To efficiently solve the proposed model, we developed an algorithm based on the PAM framework, proved its theoretical convergence, and designed an intermediate tensor reuse mechanism to reduce both runtime and computational complexity. Detailed analysis of computational complexity demonstrates that the reuse mechanism consistently lowers the computational cost in each full iteration.
    \item \textbf{Efficient numerical experiments.} We conducted numerical experiments on color videos (including both old and new types) and multi-temporal hyperspectral images. The results demonstrate that the proposed AFCTNLR algorithm consistently achieves the highest PSNR and SSIM values among all comparison methods, while reducing the runtime by 10\%–30\% compared to the FCTNFR algorithm, which attains the second-highest PSNR and SSIM.

\end{itemize}

The rest of this paper is organized as follows. Section \ref{sect_2} establishes the notations, definitions, and properties required for this paper. Section \ref{sect_3} presents the model, algorithm, complexity analysis, and convergence analysis. Section \ref{sect_4} describes our experiments conducted on color videos and hyperspectral images. Section \ref{sect_5} concludes the work presented in this paper.

\vspace{-2pt}
\section{Notations and Preliminaries}\label{sect_2}

\subsection{Notations}
In this paper, we primarily adopt the notation conventions in \cite{TG}, employing bold lowercase letters (e.g., $\mathbf{x}$), bold uppercase letters (e.g., $\mathbf{X}$), and calligraphic letters (e.g., $\mathcal{X}$) to denote vectors, matrices, and tensors, respectively. Given a matrix $\mathbf{X}\in\mathbb{R}^{I\times I}$, we define its trace as $tr(X)=\sum_{i=1}^{I}x_{ii}$, where $x_{ii}$ is the $i$th diagonal element of $\mathbf{X}$. For an $N$th tensor  $\mathcal{X} \in \mathbb{R}^{I_{1}\times \cdots \times I_{N}}$, we use $\mathcal{X}(i_1,i_{2},\ldots,i_{N})$ to denote its $(i_1,i_{2},\ldots,i_{N})$th element, and define its Frobenius norm and $\ell_1$-norm as $\|X\|_F:=$ $( \sum _{i_{1}, i_{2}, \ldots , i_{N}}| \mathcal{X} ( i_{1}, i_{2}, \ldots , i_{N}) | ^{2}) ^{1/ 2}$ and $\| \mathcal{X} \| _{1}: =$ $\sum _{i_{1}, i_{2}, \ldots , i_{N}}|$ ${\mathcal{X}}(i_{1},i_{2},\ldots,i_{N})|$, respectively. Additionally, we denote $\mathcal{A}_k$ as the $k$th  FCTN factor tensor,  $\mathbf{L}_k \in \mathbb{R}^{I_k\times I_k}$ as the second-order negative difference laplacian matrix, and $\alpha$, $\lambda$, $\delta$, $\beta$ as various parameters. $\mathbf{A}_{k(k)}$, which serves as the mode-$k$ unfolding matrix of tensor $\mathcal{A}_k$, is denoted by  $\mathrm{Unfold}{(\mathcal{A}_{k})}$.

\subsection{Preliminaries}

\textbf{Definition 1} (Generalized Tensor Transposition \cite{Zheng2021}). Given an $N$th tensor $\mathcal{X}\in\mathbb{R}^{I_1\times I_2\times\cdots\times I_N}$  and $\mathbf{n}=(n_1,n_2,\ldots,n_N)$ be an arbitrary rearrangement of $(1,2,\ldots,N)$. The tensor obtained by permuting the modes of $\mathcal{X}$ according to the order given by $\mathbf{n}$ is referred to as the $n$-mode transposition of $\mathcal{X}$, denoted by $\vec{ \mathcal{X}}^{\mathbf{n}}\in\mathbb{R}^{I_{n_{1}}\times I_{n_{2}}\times\cdots\times I_{n_{N}}}$. In other words, $\vec{\mathcal{X}^{\mathbf{n}}}(i_{n_{1}},i_{n_{2}},\ldots,i_{n_{N}})$ = $\mathcal{X}(i_{1},i_{2},\ldots,i_{N})$ for $\forall i_{k}=1,2\cdots,I_{k}$, where $k=1,2,\cdots,N$.\\

\noindent\textbf{Definition 2} (Generalized Tensor Unfolding  \cite{Zheng2021}). Given an $N$th tensor $\mathcal{X}\in\mathbb{R}^{I_1\times I_2\times\cdots\times I_N}$  and $\mathbf{n}=(n_1,n_2,\ldots,n_N)$ be an arbitrary rearrangement of $(1,2,\ldots,N)$. We define generalized tensor unfolding of $\mathcal{X}$ as an $\prod_{i=1}^dI_{n_i}\times\prod_{i=d+1}^NI_{n_i}$ matrix $\mathbf{X}_[n_{1:d}|n_{d+1:N}]$, whose elements satisfy
$\mathbf{X}_{[n_{1:d}|n_{d+1:N}]}(j_{1},j_{2})=\mathcal{X}(i_{1},i_{2},\ldots,i_{N})$
with
$$j_{1}=i_{n_{1}}+\sum_{s=2}^{d}\left((i_{n_{s}}-1)\prod_{m=1}^{s-1}I_{n_{m}}\right)$$ and  $$j_{2}=i_{n_{d+1}}+\sum_{s=d+2}^{N}\left((i_{n_{s}}-1)\prod_{m=d+1}^{s-1}I_{n_{m}}\right).$$ 
We employ $\mathbf{X}_{[n_{1:d}|n_{d+1:N}]}={\mathrm{GUnfold}(\mathcal{X},n_{1:d}|n_{d+1:N})}$ and $\mathcal{X}=\mathrm{GFold}(\mathbf{X}_{[n_{1:d}|n_{d+1:N}]},n_{1:d}|n_{d+1:N})$ to represent the unfolding operation along with its inverse.
\\

\noindent\textbf{Definition 3} (Tensor Contraction \cite{Zheng2021}). Let tensors $\mathcal{X} \in \mathbb{R}^{l_1 \times l_2 \times \cdots \times l_N}$ and $\mathcal{Y} \in \mathbb{R}^{l_1 \times l_2 \times \cdots \times l_M}$ satisfy $I_{n_i} = J_{m_i}$ for $i = 1, 2, \ldots, d$. Here, $\mathbf{n} = (n_1, n_2, \ldots, n_N)$ and $\mathbf{m} = (m_1, m_2, \ldots, m_M)$ are arbitrary permutations of $(1, 2, \ldots, N)$ and $(1, 2, \ldots, M)$ such that $n_{d+1} < n_{d+2} < \cdots < n_N$ and $m_{d+1} < m_{d+2} < \cdots < m_M$, respectively. The contraction between the $n_{1:d}$th-modes of $\mathcal{X}$ and the $m_{1:d}$th-modes of $\mathcal{Y}$, denoted by $\mathcal{X} \times_{n_{1:d}}^{m_{1:d}} \mathcal{Y}$, produces a tensor of $(N+M-2d)$ order
\[
\mathcal{Z} = \mathcal{X} \times_{n_{1:d}}^{m_{1:d}} \mathcal{Y} \in \mathbb{R}^{l_{n_{d+1}} \times \cdots \times l_{n_N} \times l_{m_{d+1}} \times \cdots \times l_{m_M}}.
\]
The tensor $\mathcal{Z}$ satisfies the following relationship:
$$
\mathbf{Z}_{[1:M-d;M-d+1:M+N-2d]}=\mathbf{X}_{[n_{d+1:N};n_{1:d}]}\mathbf{Y}_{[m_{1:d};m_{d+1:M}]}.
$$

\begin{property}\cite{Zheng2021}\label{pro_1}
 Let $\mathcal{X}=\operatorname{FCTN}\left(\left\{\mathcal{A}_{k}\right\}_{k=1}^{N}\right)$ denote that the tensor $\mathcal{X} \in \mathbb{R}^{I_{1} \times I_{2} \times \ldots \times I_{N}}$ is formed from a collection of factors $\mathcal{A}_{k}$ $(k=1,2,\ldots, N)$, and  $\mathcal{M}_{k}=\operatorname{FCTN}\left(\left\{\mathcal{A}_{k}\right\}_{k=1}^{N}, / \mathcal{A}_{k}\right)$ signifies that $\mathcal{A}_{k}$ $(k \in{1,2, \ldots, N})$ is omitted from the calculation. Consequently, the following property is satisfied 
 $$\mathbf{X}_{(k)}=\mathbf{A}_{k(k)} \mathbf{M}_{k\left[i_{1: N-1};n_{1: N-1}\right]},$$
where
$$i_{t}=\left\{\begin{array}{l}
2 t, \quad \text { if } t<k, \\
2 t-1, \text { if } t \geq k,
\end{array} \quad \text { and }\quad  n_{t}=\left\{\begin{array}{ll}
2 t-1, & \text { if } t<k, \\
2 t, & \text { if } t \geq k .
\end{array}\right.\right.$$\\
\end{property}

\begin{theorem}
(Fast solution of the Sylvester matrix equation \cite{Sylvester1867, Zheng2022})\label{TH_1}
Suppose that $\mathbf{M} \in \mathbb{R}^{m \times m}, \mathbf{N} \in \mathbb{R}^{n \times n}$ and $\mathbf{X}, \mathbf{Y} \in \mathbb{R}^{m \times n}$. If  $\mathbf{I}_{n} \otimes \mathbf{M}+\mathbf{N}^{T} \otimes \mathbf{I}_{m}$ is an invertible matrix, the classical Sylvester matrix equation
$$
\mathbf{M X}+\mathbf{X} \mathbf{N}=\mathbf{Y}
$$
has a unique solution. In addition, if  $\mathbf{M}$  and  $\mathbf{N}$  can be decomposed into

$$\mathbf{M}=\mathbf{F} \Psi \mathbf{F}^{T} \text { and }\quad  \mathbf{N}=\mathbf{U} \Sigma \mathbf{U}^{T},$$
where  $\Psi$  and  $\Sigma$  are the diagonal matrices,  $\mathbf{F}$  and  $\mathbf{U}$  are the unitary matrices. We can get a unique solution to  $\mathbf{X}$
$$\mathbf{X}=\mathbf{F}\left((1 \oslash \mathbf{T}) \odot\left(\mathbf{F}^{T} \mathbf{Y U}\right)\right) \mathbf{U}^{T}.$$
Here $\oslash$ and $\odot$ represent element-wise multiplication and element-wise division, respectively.
Assuming that $\operatorname{diag}(\Psi)$ represents the column vector whose elements correspond to the diagonal entries of $\Psi$, $\mathbf{T} \in \mathbb{R}^{m\times n}$ is defined as
$$
\begin{aligned}
\mathbf{T}= & (\operatorname{diag}(\Psi), \operatorname{diag}(\Psi), \ldots, \operatorname{diag}(\Psi)) \\
& +(\operatorname{diag}(\Sigma), \operatorname{diag}(\Sigma), \ldots, \operatorname{diag}(\Sigma))^{T}.
\end{aligned}
$$
\end{theorem}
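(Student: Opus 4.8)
The plan is to recast the matrix equation as a single linear system via vectorization, obtain existence and uniqueness directly from the invertibility hypothesis, and then use the two eigendecompositions to diagonalize the coefficient matrix and extract the stated closed form.

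First I would vectorize $\mathbf{MX}+\mathbf{XN}=\mathbf{Y}$. Using the identity $\operatorname{vec}(\mathbf{A}\mathbf{X}\mathbf{B})=(\mathbf{B}^{T}\otimes\mathbf{A})\operatorname{vec}(\mathbf{X})$ on the two left-hand terms (with $\mathbf{A}=\mathbf{M},\mathbf{B}=\mathbf{I}_n$ and $\mathbf{A}=\mathbf{I}_m,\mathbf{B}=\mathbf{N}$ respectively) converts the equation into $(\mathbf{I}_n\otimes\mathbf{M}+\mathbf{N}^{T}\otimes\mathbf{I}_m)\operatorname{vec}(\mathbf{X})=\operatorname{vec}(\mathbf{Y})$. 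Writing $\mathbf{K}:=\mathbf{I}_n\otimes\mathbf{M}+\mathbf{N}^{T}\otimes\mathbf{I}_m$, the hypothesis that $\mathbf{K}$ is invertible gives the unique solution $\operatorname{vec}(\mathbf{X})=\mathbf{K}^{-1}\operatorname{vec}(\mathbf{Y})$, and hence a unique $\mathbf{X}$; this proves the first claim.

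Next I would diagonalize $\mathbf{K}$. Substituting $\mathbf{M}=\mathbf{F}\Psi\mathbf{F}^{T}$ and $\mathbf{N}=\mathbf{U}\Sigma\mathbf{U}^{T}$ (so that $\mathbf{N}^{T}=\mathbf{U}\Sigma\mathbf{U}^{T}$ because $\Sigma$ is diagonal), and inserting the factorizations $\mathbf{I}_n=\mathbf{U}\mathbf{I}_n\mathbf{U}^{T}$, $\mathbf{I}_m=\mathbf{F}\mathbf{I}_m\mathbf{F}^{T}$ permitted by unitarity, the mixed-product rule $(\mathbf{A}\otimes\mathbf{B})(\mathbf{C}\otimes\mathbf{D})=(\mathbf{AC})\otimes(\mathbf{BD})$ yields
$$\mathbf{K}=(\mathbf{U}\otimes\mathbf{F})\bigl(\mathbf{I}_n\otimes\Psi+\Sigma\otimes\mathbf{I}_m\bigr)(\mathbf{U}\otimes\mathbf{F})^{T}.$$
The inner matrix is diagonal, and listed in vec order its diagonal entries are exactly $\psi_i+\sigma_j$, i.e. the entries of $\mathbf{T}$; invertibility of $\mathbf{K}$ forces every $\psi_i+\sigma_j\neq 0$, so the entrywise reciprocals are well defined. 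Because a Kronecker product of orthogonal matrices is orthogonal, $(\mathbf{U}\otimes\mathbf{F})^{-1}=(\mathbf{U}\otimes\mathbf{F})^{T}$ and therefore $\mathbf{K}^{-1}=(\mathbf{U}\otimes\mathbf{F})\bigl(\mathbf{I}_n\otimes\Psi+\Sigma\otimes\mathbf{I}_m\bigr)^{-1}(\mathbf{U}\otimes\mathbf{F})^{T}$.

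Finally I would devectorize. Since $(\mathbf{U}\otimes\mathbf{F})^{T}=\mathbf{U}^{T}\otimes\mathbf{F}^{T}$, applying it to $\operatorname{vec}(\mathbf{Y})$ gives $\operatorname{vec}(\mathbf{F}^{T}\mathbf{Y}\mathbf{U})$; multiplying by the diagonal inverse divides each entry by the corresponding $\psi_i+\sigma_j$, which is precisely the reshaped Hadamard expression $(1\oslash\mathbf{T})\odot(\mathbf{F}^{T}\mathbf{Y}\mathbf{U})$; and the remaining left factor $(\mathbf{U}\otimes\mathbf{F})$ folds this back into $\mathbf{F}(\cdot)\mathbf{U}^{T}$, recovering the asserted formula $\mathbf{X}=\mathbf{F}\bigl((1\oslash\mathbf{T})\odot(\mathbf{F}^{T}\mathbf{Y}\mathbf{U})\bigr)\mathbf{U}^{T}$. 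I expect the only real obstacle to be bookkeeping: keeping the Kronecker ordering consistent with the column-stacking vec convention, and verifying that the diagonal of $\mathbf{I}_n\otimes\Psi+\Sigma\otimes\mathbf{I}_m$, read in vec order, matches $\operatorname{vec}(\mathbf{T})$ entry-by-entry so that the Hadamard reshaping is exact rather than merely up to a permutation.
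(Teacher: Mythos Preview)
Your proof is correct and follows the standard vectorization--diagonalization route for Sylvester equations. Note, however, that the paper does not supply its own proof of this theorem: it is stated in the Preliminaries section as a cited result from \cite{Sylvester1867, Zheng2022} and is used as a black box later (equations (\ref{model_6})--(\ref{eq_8})) to solve the factor subproblems. There is therefore no paper proof to compare against; your argument simply fills in what the paper outsources to its references, and does so in the expected way.
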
 


\section{Model and Algorithm}\label{sect_3}
In this section, we propose a novel trace regularized tensor completion model based on the FCTN decomposition. Building upon the PAM framework, we design an efficient algorithm to solve the model, in which an intermediate tensor reuse strategy is introduced to improve computational efficiency. Moreover, we conduct a detailed analysis of the computational complexity and convergence of the proposed algorithm.
\subsection{The proposed model}
Assuming that  $\mathcal{Y} \in \mathbb{R}^{I_{1} \times I_{2} \times \cdots \times I_{N}}$  represents a partial observation of the target tensor  $\mathcal{X} \in \mathbb{R}^{I_{1} \times I_{2} \times \cdots \times I_{N}}$, we present the proposed tensor completion model as
\begin{equation}
\begin{aligned}\label{model_1}
\min_{\mathcal{A}_{k},\mathcal{X}}&\quad\frac{1}{2}\left\|\mathcal{X}-\mathrm{FCTN}\left(\{\mathcal{A}_{k}\}_{k=1}^{N}\right)\right\|_{F}^{2}+\sum_{k=1}^{N}\frac{\lambda_{k}}{2}\mathrm{tr}(\mathbf{A}^T_{k(k)}\mathbf{L}_k\mathbf{A}_{k(k)}),\\\mathrm{s.t.}&\quad\mathcal{P}_{\Omega}(\mathcal{X}-\mathcal{Y})=0, k=1,2,\cdots,N,
\end{aligned}
\end{equation}
where  $\mathcal{A}_{k} \in \mathbb{R}^{R_{1, k} \times R_{2, k} \times \cdots \times R_{k-1, k} \times I_{k} \times R_{k, k+1} \times \cdots \times R_{k, N}}$, $\mathbf{A}_{k(k)} \in \mathbb{R}^{I_{k} \times \prod_{i=1, i \neq k}^{N} R_{k, i}}$  is the mode-$k$  unfolding of  $\mathcal{A}_{k}$, $\lambda_{k}>0$  is a regularization parameter. $\mathbf{L}_k \in \mathbb{R}^{I_k \times I_k}$ of the periodically modified negative laplacian matrix is given as follows
$$
\mathbf{L_k}= \begin{bmatrix}
  -2-\delta & 1 &  0& \cdots & 0&1\\
  1& -2-\delta &  1&  \cdots& 0&0\\
  0&  1&  -2-\delta &  \cdots&0&0 \\
\vdots&\vdots&\vdots&\ddots&\vdots &\vdots  \\
0&0&0&\cdots&-2-\delta &1\\
  1&  0&  0&\cdots& 1&-2-\delta 
\end{bmatrix}, 
$$
where $\delta>0$ is a hyperparameter. $\Omega$ denotes the index set of the observed elements, and 
$\mathcal{P}_{\Omega}(\mathcal{X})$ is the projection operator that preserves the entries in 
$\Omega$ while setting all other entries to zero.

In our proposed model, the objective consists of two components, $\left\|\mathcal{X}-\mathrm{FCTN}\left(\{\mathcal{A}_{k}\}_{k=1}^{N}\right)\right\|_{F}^{2}$ and $\mathrm{tr}(\mathcal{A}^T_{k(k)}\mathbf{L}_k\mathcal{A}_{k(k)})$. During the optimization process, each component is converted into a subproblem, and the corresponding FCTN factors are solved accordingly. 
The $\left\|\mathcal{X}-\mathrm{FCTN}\left(\{\mathcal{A}_{k}\}_{k=1}^{N}\right)\right\|_{F}^{2}$ serves to maintain the low-rank property of the reconstructed tensor $\mathcal{X}$. 
According to Property \ref{pro_1}, every column of $\mathbf{X}_{(k)}$ can be represented as a linear combination of the columns of $\mathbf{A}_{k(k)}$. Therefore, the $\mathrm{tr}(\mathcal{A}^T_{k(k)}\mathbf{L}_k\mathcal{A}_{k(k)})$ penalizes the second-order variations of $\mathbf{A}_{k(k)}$, thereby enhancing the smoothness of $\mathbf{A}_{k(k)}$ and indirectly improving the local smoothness of $\mathbf{X}_{(k)}$, which in turn boosts the quality of tensor completion.

\subsection{The proposed algorithm}
In this subsection, we employ the PAM algorithm framework to solve our proposed model. The problem (\ref{model_1}) is converted into the unconstrained optimization problem as follows:
\begin{equation}
\begin{aligned}\label{model_2}
\min_{\mathcal{A}_{k},\mathcal{X}}&\quad\frac{1}{2}\left\|\mathcal{X}-\mathrm{FCTN}\left(\{\mathcal{A}_{k}\}_{k=1}^{N}\right)\right\|_{F}^{2}+\sum_{k=1}^{N}\frac{\lambda_{k}}{2}\mathrm{tr}(\mathbf{A}^T_{k(k)}\mathbf{L}_k\mathbf{A}_{k(k)})+\iota_{\mathbb{D}}(\mathcal{X}),\\\mathrm{where}&\quad\iota_{\mathbb{D}}(\mathcal{X}):=\left\{\begin{array}{c}
0, \text { if } \mathcal{X} \in \mathbb{D}, \\
\infty, \text { otherwise. }
\end{array} \text { with } \mathbb{D}:=\left\{\mathcal{X}: \mathcal{P}_{\Omega}(\mathcal{X}-\mathcal{Y})=0\right\}\right., k=1,2,\cdots,N.
\end{aligned}
\end{equation}
Through the PAM algorithm framework, we can optimize the optimization problem in (\ref{model_2}) by alternately updating the following system of equations:
\vspace{8pt}
\begin{equation}
\left\{\begin{array}{l}
\begin{aligned}\label{model_3}
\mathcal{A}_{k}^{(t+1)}= \underset{\mathcal{A}_{k}}{\arg \min }\,\,& \frac{1}{2} \| \mathcal{X}^{(t)}-\operatorname{FCTN}\left(\mathcal{A}_{1, k-1}^{(t+1)}, \mathcal{A}_{k}, \mathcal{A}_{k+1: N}^{(t)}) \|_{F}^{2}\right. \\
&+\frac{\lambda_{k}}{2}\mathrm{tr}(\mathcal{A}^T_{k(k)}\mathbf{L}_k\mathcal{A}_{k(k)})+\frac{\rho}{2}\left\|\mathcal{A}_{k}-\mathcal{A}_{k}^{(t)}\right\|_{F}^{2}, \\
\mathcal{X}^{(t+1)}=\underset{\mathcal{X}}{\arg \min }\,\,& \frac{1}{2}\left\|\mathcal{X}-\operatorname{FCTN}\left(\left\{\mathcal{A}_{k}^{(t+1)}\right\}_{k=1}^{N}\right)\right\|_{F}^{2}+\iota_{\mathbb{S}}(\mathcal{X})+\frac{\rho}{2}\left\|\mathcal{X}-\mathcal{X}^{(t)}\right\|_{F}^{2},
\end{aligned}
\end{array}\right.
\end{equation}
where $k=1,2,\cdots,N,$ $\rho>0$ is a proximal parameter, and $t$ is the iteration index. We now turn to the update rules of each subproblem. For Equations (\ref{model_3}), the updating procedure involves two main steps: (i) updating the FCTN factor $\mathcal{A}_k^{(t+1)}$, and (ii) updating the reconstructed tensor $\mathcal{X}^{(t+1)}$.

For updating the FCTN factor $\mathcal{A}_k^{(t+1)}$, since its derivative is difficult to compute in tensor form, we convert it into the following matrix form:

\begin{equation}\label{model_4}
 \underset{\mathbf{A}_{k(k)}}{\arg \min }\,\,\frac{1}{2} \| \mathbf{X}^{(t)}_{(k)}-\mathbf{A}_{k(k)}\mathbf{M}_{k}^{(t)}\|_{F}^{2}+\frac{\lambda_{k}}{2}\mathrm{tr}(\mathbf{A}^T_{k(k)}\mathbf{L}_k\mathbf{A}_{k(k)})+\frac{\rho}{2}\left\|\mathbf{A}_{k(k)}-\mathbf{A}_{k(k)}^{(t)}\right\|_{F}^{2},
\end{equation}
where $\mathbf{M}_{k}^{(t)}={\mathrm{GUnfold}(\mathcal{M}_k^{(t)},\mathbf{i}|\mathbf{n})}$, $\mathcal{M}^{(t)}_{k}=\operatorname{FCTN}\left(\mathcal{A}^{(t+1)}_{1:k-1},\mathcal{A}_{k},\mathcal{A}^{(t)}_{k+1:N}, /\mathcal{A}_{k}\right)$ and vectors 
$\mathbf{i}$ and $\mathbf{n}$ are set in the same way as in Property \ref{pro_1}. In the matrix form, the differentiation of objective function (\ref{model_4}) is straightforward, and by computing it we obtain the following sylvester matrix equation:
\begin{equation}\label{model_5}
\mathbf{A}_{k(k)}\mathbf{M}_{k}^{(t)}(\mathbf{M}^{(t)}_{k})^T+\lambda_k\mathbf{L}_k\mathbf{A}_{k(k)}+\rho\mathbf{A}_{k(k)}=\mathbf{X}_{(k)}^{(t)}(\mathbf{M}^{(t)}_{k})^T+\rho\mathbf{A}^{(t)}_{k(k)}.
\end{equation}

\noindent By exploiting the properties of the kronecker product, we can transform the sylvester matrix equation (\ref{model_5}) into the following form:
\begin{equation}\label{model_6}
(\lambda_k\mathbf{I}_{s_k}\otimes\mathbf{L}_k+\mathbf{M}_{k}^{(t)}(\mathbf{M}^{(t)}_{k})^T\otimes\mathbf{I}_{q_k}+\rho\mathbf{I}_{s_kq_k})vec(\mathbf{A}_{k(k)})=vec(\mathbf{X}_{(k)}^{(t)}(\mathbf{M}^{(t)}_{k})^T+\rho\mathbf{A}^{(t)}_{k(k)}),
\end{equation}
where $s_k={\prod_{i=1, i \neq k}^{N} R_{k, i}}$ and $q_k=I_k$. According to Theorem \ref{TH_1}, we can solve Equation (6) both accurately and efficiently. Since matrix 
$\mathbf{M}_{k}^{(t)}(\mathbf{M}^{(t)}_{k})^T$ is a symmetric positive semi-definite matrix and $\mathbf{L}_k$ is a periodic circulant matrix, 
$\mathbf{M}_{k}^{(t)}(\mathbf{M}^{(t)}_{k})^T$ can be transformed into a form involving a diagonal matrix via eigenvalue decomposition, while 
$\mathbf{L}_k$ can be transformed into a form involving a diagonal matrix through one-dimensional Fourier transform. Their specific forms are as follows:
\begin{equation}
\mathbf{M}_{k}^{(t)}(\mathbf{M}^{(t)}_{k})^T=\mathbf{C}\Phi \mathbf{C}^T,  \mathbf{L}_k=\mathbf{F}^{H}\Lambda \mathbf{F}.
\end{equation}
Then we can derive a solution as follows:
\begin{equation}\label{eq_8}
\mathbf{A}_{k(k)}^{(t+1)}=\mathbf{F}^{H}\left((1 \oslash \mathbf{T}) \odot\left(\mathbf{F} \mathbf{Y C}\right)\right) \mathbf{C}^{T},    
\end{equation}
where $\mathbf{T}=\lambda_k \text{repvec}(\text{diag}(\Lambda),s_k)+(\text{repvec}(\text{diag}(\Phi),I_k))^T+\rho\text{ones}(I_k,s_k)$, $\mathbf{Y}=\mathbf{X}_{k}^{(t)}(\mathbf{M}^{(t)}_{k})^T+\rho\mathbf{A}^{(t)}_{k(k)}$. The revec($\cdot$) produces an $n\times m$ matrix, which is generated by repeating an $n\times 1$ column vector 
$m$ times. The diag($\cdot$) returns a column vector whose elements are derived from the main diagonal of the diagonal matrix. Therefore, by applying generalized folding to the solution 
$\mathbf{A}_{k(k)}^{(t+1)}$, we can obtain the FCTN factor 
$\mathcal{A}_{k(k)}^{(t+1)}$ as follows: 
\begin{equation}\label{eq_9}
\mathcal{A}_{k}^{(t+1)}= \mathrm{GFold}(\mathbf{A}_{k(k)}^{(t+1)},k|1,\cdots, k-1,k+1,\cdots, N).  
\end{equation}

  For updating the reconstructed tensor $\mathcal{X}^{(t+1)}$, we can find it  closed-form solution, whose form is as follows:
\begin{equation}\label{eq_10}
\mathcal{X}^{(t+1)}=\mathcal{P}_{\Omega ^{c}}\left( \frac{\operatorname{FCTN}\left(\left\{\mathcal{A}_{k}^{(t+1)}\right\}_{k=1}^{N}\right)+\rho\mathcal{X}^{(t)}}{1+\rho}\right)+ \mathcal{P}_{\Omega}(\mathcal{Y}).
\end{equation}

Thus, we obtain Algorithm \ref{FCTNNLR} for solving the proposed model based on the PAM algorithm framework.
\vspace{8pt}
\begin{algorithm}[H]
\caption{PAM-FCTNLR}\label{FCTNNLR}
{\bfseries Input:} The observed tensor $\mathcal{Y}\in \mathbb{R}^{I_{1}\times \cdots \times I_{N}}$, the index of the known elements $\Omega$, the maximal FCTN-rank $r_{\text{FCTN}}^{max}$, and the parameters $\delta_k$ and $\lambda_k$ for $k=1,2,\cdots,N$, $\rho=0.1$, stopping criteria $\epsilon = 1\times10^{-4}$ and maximum number of iterations M.\\
{\bfseries Initialize:} The initial iteration index $t=1$, the initial FCTN-rank $\mathbf{r}_{\text{FNTC}}=\text{ones}(1,N(N-2)/2)$, $\mathcal{X}^{(1)}=\mathcal{Y}$, $\mathcal{A}_{k}^{(1)}=\text{rand}(R_{1,k},R_{2,k},\cdots,R_{k-1,k},I_{k},R_{k,k+1},\cdots,R_{k,N})$, periodically modified negative laplacian matrixs $\mathbf{L}_{k}$. Here $k=1,2,\cdots,N$ and elements generated by $\text{rand}(\cdot)$ follow a standard normal distribution.
\begin{algorithmic}[1]
\While{($\left \| \mathcal{X}^{(t+1)}-  \mathcal{X}^{(t)}\right \| _{F}/\left \| \mathcal{X}^{(t)} \right \|_{F}>\epsilon $) \textbf{ and } ($t =1,2,\cdots, M$)}
\For {$k=1,2,\dots N$}
 \State Obtain $\mathbf{X}_{(k)}^{(t)}$ via $\mathrm{Unfold}{(\mathcal{X}^{(t)})}$.
 \State Compute $\mathcal{M}_{k}^{(t)}=\operatorname{FCTN}\left(\mathcal{A}^{(t+1)}_{1:k-1},\mathcal{A}_{k},\mathcal{A}^{(t)}_{k+1:N}, /\mathcal{A}_{k}\right)$.
 \State Obtain $\mathbf{M}_k^{(t)}$ via ${\mathrm{GUnfold}(\mathcal{M}_k^{(t)},\mathbf{i}|\mathbf{n})}$.
 \State Compute $\mathbf{X}_{k}^{(t)}(\mathbf{M}^{(t)}_{k})^T$, $\operatorname{SVD}(\mathbf{M}^{(t)}_{k}(\mathbf{M}^{(t)}_{k})^T)$ and $\operatorname{FFT}(\mathbf{L}_k)$.
 \State Update $\mathcal{A}_k^{(t+1)}$ according to (\ref{eq_8}) and (\ref{eq_9}).
\EndFor
   \State  Update $\mathcal{X}^{(t+1)}$ via (\ref{eq_10}).
\EndWhile
\end{algorithmic}
{\bfseries Output:} The reconstructed tensor $\mathcal{X}$.
\end{algorithm}
\vspace{8pt}

\subsection{Intermediate tensor reutilization mechanism}

In Algorithm \ref{FCTNNLR}, the main computational cost arises from Lines 5, 7, and 10. For Line 7, apart from using random sampling or other stochastic methods, no alternative approaches are currently available. However, for Lines 5 and 10, the computational burden can be reduced by reusing the intermediate tensors generated during the process.

We illustrate the mechanism of reutilizing intermediate tensors by means of the FCTN decomposition of fourth-order tensors. The FCTN decomposition process for fourth-order tensors is presented in Figure \ref{fig_1}.

\begin{figure}[!ht]
\vspace{-10pt}
    \centering
    \includegraphics[width=0.4\linewidth]{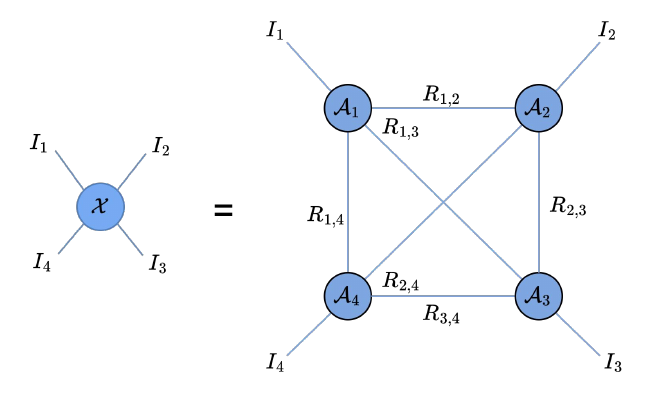}
    \caption{The fourth-order FCTN decomposition of  tensor.}
    \label{fig_1}
\end{figure}

\noindent In the iterative process of Algorithm \ref{FCTNNLR} for a fourth-order tensor, four distinct tensors, $\mathcal{M}_1$, $\mathcal{M}_2$, $\mathcal{M}_3$ ,$\mathcal{M}_4$ are produced. Notably, $\mathcal{M}_1$ and $\mathcal{M}_2$, as well as $\mathcal{M}_3$ and $\mathcal{M}_4$, involve overlapping intermediate tensors, leading to repeated computations. The intermediate tensor shared by $\mathcal{M}_1$ and $\mathcal{M}_2$ is shown in  Figure \ref{fig_2}.

\begin{figure}[!ht]
\vspace{-8pt}
    \centering
    \includegraphics[width=0.70\linewidth]{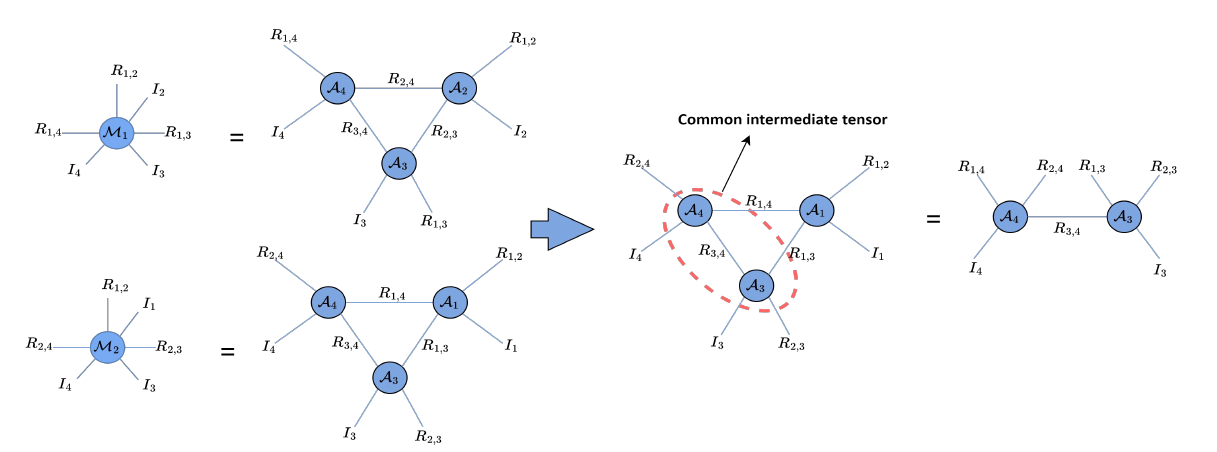}
    \caption{Common intermediate tensor of $\mathcal{M}_1$ and $\mathcal{M}_2$.}
    \label{fig_2}
\end{figure}

\noindent The intermediate tensor common to $\mathcal{M}_3$ and $\mathcal{M}_4$ is depicted in Figure \ref{fig_3}.
\begin{figure}[!ht]
\vspace{-10pt}
    \centering
    \includegraphics[width=0.70\linewidth]{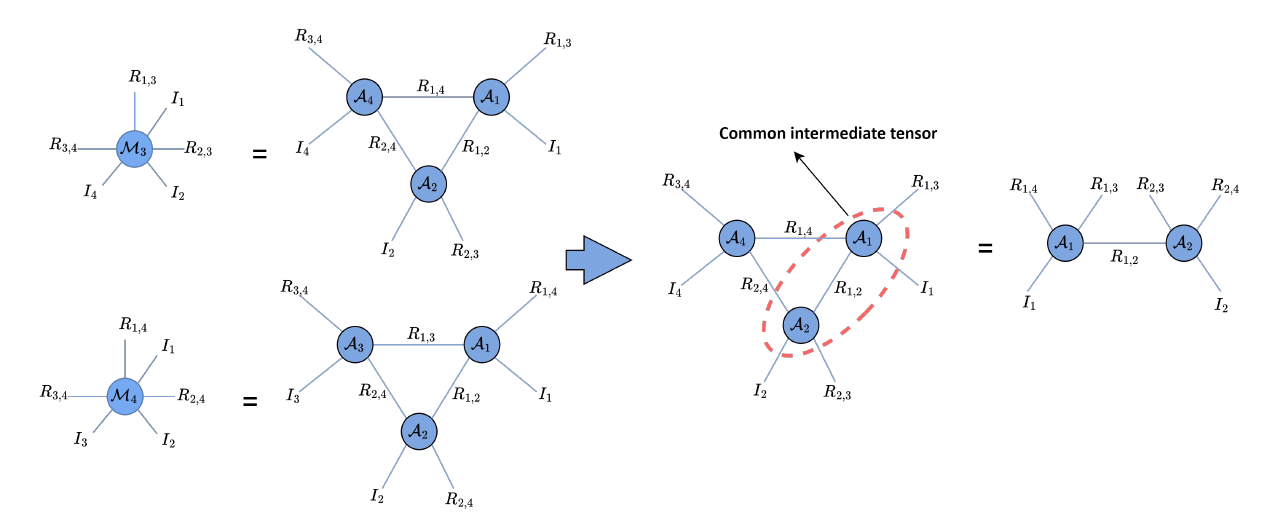}
    \caption{Common intermediate tensor of $\mathcal{M}_3$ and $\mathcal{M}_4$.}
    \label{fig_3}
\end{figure}

\noindent Therefore, during the iteration process, we can store these redundantly computed tensors to avoid repeated calculations, thereby improving the computational efficiency of Line 5 in Algorithm \ref{FCTNNLR}.

For Line 10 in Algorithm \ref{FCTNNLR}, the main computational cost comes from the generation of the $\operatorname{FCTN}\left(\left\{\mathcal{A}_{k}^{(t+1)}\right\}_{k=1}^{N}\right)$. Similarly, we can accelerate this process by utilizing intermediate tensor produced during the algorithm's iteration. Then, we take a fourth-order tensor as an example to demonstrate this acceleration process. The generation of $\operatorname{FCTN}\left(\left\{\mathcal{A}_{k}^{(t+1)}\right\}_{k=1}^{4}\right)$ involves $\mathcal{M}_{4}^{(t)}$, which are shown in Figure \ref{fig_4}.

\begin{figure}[!ht]
    \centering
    \includegraphics[width=0.88\linewidth]{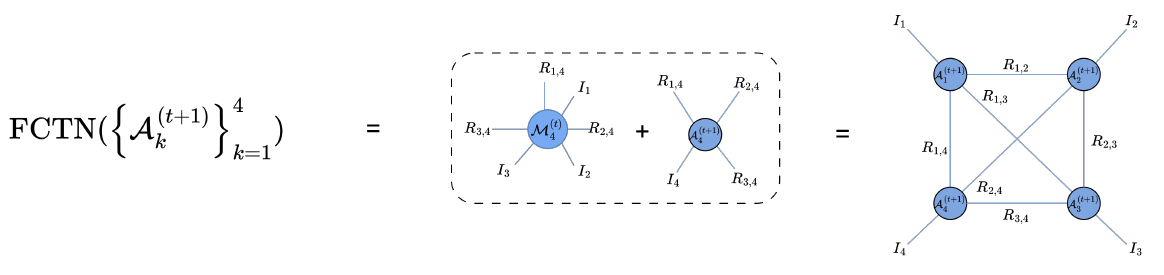}
    \caption{Generate $\operatorname{FCTN}\left(\left\{\mathcal{A}_{k}^{(t+1)}\right\}_{k=1}^{4}\right)$ using $\mathcal{M}_{4}^{(t)}$.}
    \label{fig_4}
\end{figure}

\noindent Within the iteration of Algorithm \ref{FCTNNLR}, $\mathbf{M}_4^{(t)}$ is preserved, and generating  $\operatorname{FCTN}\left(\left\{\mathcal{A}_{k}^{(t+1)}\right\}_{k=1}^{4}\right)$ by applying its generalized unfolding $\mathcal{M}_4^{(t)}$ together with $\mathcal{A}_4^{(t+1)}$ can significantly accelerate the computation in Line 9. Generating $\operatorname{FCTN}\left(\left\{\mathcal{A}_{k}^{(t+1)}\right\}_{k=1}^{4}\right)$ also involves the following different combinations, as shown in Figure \ref{fig_5}.

\begin{figure}[!ht]
    \centering
    \includegraphics[width=0.90\linewidth]{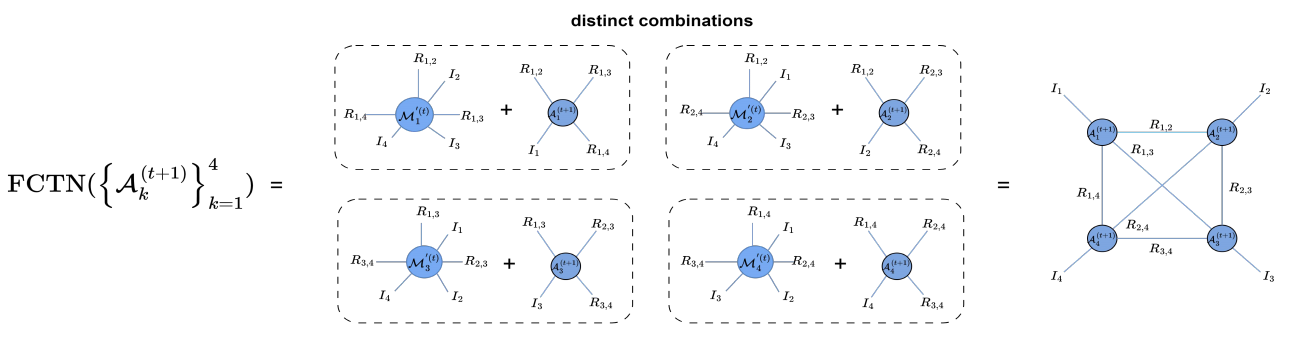}
    \caption{Distinct combinations for generating $\operatorname{FCTN}\left(\left\{\mathcal{A}_{k}^{(t+1)}\right\}_{k=1}^{4}\right)$.}
    \label{fig_5}
\end{figure}

\noindent In Figure \ref{fig_5} $\mathcal{M}^{'(t)}_{k}=\operatorname{FCTN}\left(\mathcal{A}^{(t+1)}_{1:k-1,k+1:N},\mathcal{A}_{k}, /\mathcal{A}_{k}\right)$. The combinations of 
$\operatorname{FCTN}\left(\left\{\mathcal{A}_{k}^{(t+1)}\right\}_{k=1}^{4}\right)$
from Figure \ref{fig_5}, we know that the Algorithm \ref{FCTNNLR}  completes a full iteration under different indices and quickly generates 
$\operatorname{FCTN}\left(\left\{\mathcal{A}_{k}^{(t+1)}\right\}_{k=1}^{4}\right)$.

Therefore, by integrating the intermediate tensor reutilization  mechanism in Line 5 of Algorithm \ref{FCTNNLR} with the rapid generation of diverse combinations for $\operatorname{FCTN}\left(\left\{\mathcal{A}_{k}^{(t+1)}\right\}_{k=1}^{4}\right)$, we propose an unordered update FCTN factors mechanism to further accelerate Algorithm \ref{FCTNNLR}. The process of the unordered update FCTN factors is illustrated in Figure \ref{fig_6}.

\begin{figure}[!ht]
    \centering
    \includegraphics[width=1\textwidth,height=9cm]{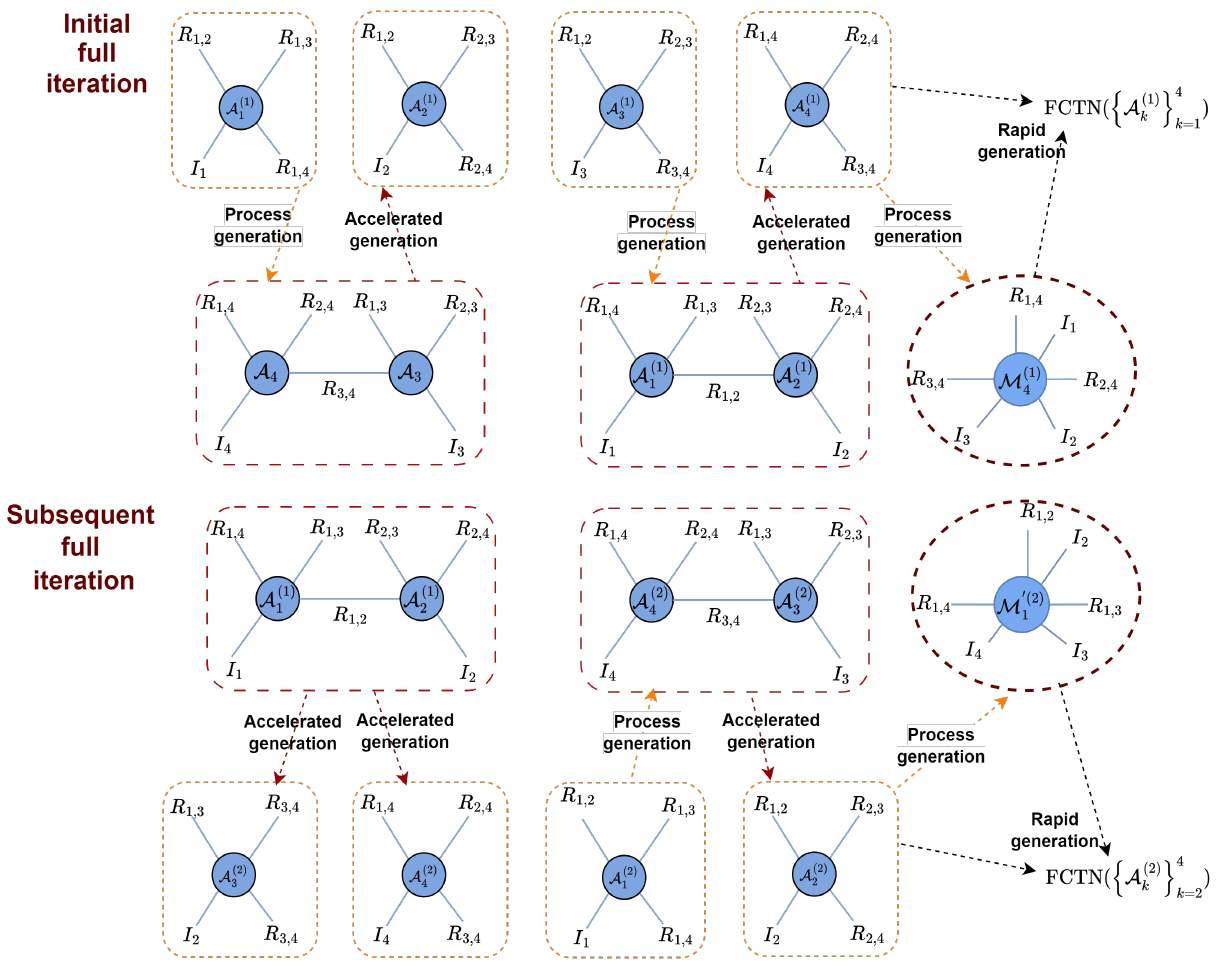}
    \caption{Unordered update of FCTN factors.}
    \label{fig_6}
\end{figure}

In accordance with the above acceleration mechanism, we propose the accelerated Algorithm \ref{FCTNNLR}, whose specific steps are presented below.

\begin{algorithm}[H]
\caption{PAM-AFCTNLR}\label{AFCTNNLR}
{\bfseries Input:} The observed tensor $\mathcal{Y}\in \mathbb{R}^{I_{1}\times \cdots \times I_{N}}$, the index of the known elements $\Omega$, the maximal FCTN-rank $r_{\text{FCTN}}^{max}$, and the parameters $\delta_k$ and $\lambda_k$  $\rho=0.1$, stopping criteria $\epsilon = 1\times10^{-4}$ and maximum number of iterations M.\\
{\bfseries Initialize:} The initial iteration index $t=1$, the initial FCTN-rank $\mathbf{r}_{\text{FNTC}}=\text{ones}(1,N(N-2)/2)$, $\mathcal{X}^{(1)}=\mathcal{Y}$, $\mathcal{A}_{k}^{(1)}=\text{rand}(R_{1,k},R_{2,k},\cdots,R_{k-1,k},I_{k},R_{k,k+1},\cdots,R_{k,N})$, periodically modified negative laplacian matrixs $\mathbf{L}_{k}$, iteration order 
$\mathbf{n}=1,2,\cdots,N$. Here $k=1,2,\cdots,N$ and elements generated by $\text{rand}(\cdot)$ follow a standard normal distribution.
\begin{algorithmic}[1]
\While{($\left \| \mathcal{X}^{(t+1)}-  \mathcal{X}^{(t)}\right \| _{F}/\left \| \mathcal{X}^{(t)} \right \|_{F}>\epsilon $) \textbf{ and } ($t =1,2,\cdots, M$)}
\For {$k=\mathbf{n}$}
 \State Obtain $\mathbf{X}_{(k)}^{(t)}$ via $\mathrm{Unfold}{(\mathcal{X}^{(t)})}$.
 \State Compute $\mathcal{M}_{k}^{(t)}$ or $\mathcal{M}_{k}^{'(t)}$ with \textbf{the intermediate tensor reuse mechanism}.
 \State Obtain $\mathbf{M}_k^{(t)}$ via ${\mathrm{GUnfold}(\mathcal{M}_k^{(t)},\mathbf{i}|\mathbf{n})}$.
 \State Compute $\mathbf{X}_{k}^{(t)}(\mathbf{M}^{(t)}_{k})^T$, $\operatorname{SVD}(\mathbf{M}^{(t)}_{k}(\mathbf{M}^{(t)}_{k})^T)$ and $\operatorname{FFT}(\mathbf{L}_k)$.
 \State Update $\mathcal{A}_k^{(t+1)}$ according to (\ref{eq_8}) and (\ref{eq_9}).
\EndFor
\State  \textbf{Update $\mathcal{X}^{(t+1)}$ efficiently with $\mathcal{M}_{k}^{'(t)}$ }.
\State  \textbf{Update $\mathbf{n}= \widetilde{\mathbf{n}}$, where $\widetilde{\mathbf{n}}$ is unordered permutation of $\mathbf{n}$, and let $t=t+1$}.
\EndWhile
\end{algorithmic}
{\bfseries Output:} The reconstructed tensor $\mathcal{X}$.
\end{algorithm}

\subsection{Computational complexity analysis}
In this subsection, we analyze the computational complexity of Algorithm \ref{FCTNNLR} and Algorithm \ref{AFCTNNLR} simultaneously, so as to compare the reduction in computational complexity achieved by Algorithm \ref{AFCTNNLR}  relative to Algorithm \ref{FCTNNLR}. We first analyze the computational complexity under the case of fourth-order tensor decomposition, and then extend it to the general $N$th order tensor case.

For ease of analysis, we assume that the input fourth-order tensor $\mathcal{Y}$ has the shape 
$I\times I \times I \times I $, and that its FCTN-rank is $(R_{1,2},R_{1,3},R_{1,4},R_{2,3},R_{2,4},R_{3,4}) \in \mathbb{R}^{6}$, where $R=R_{1,2}=R_{1,3}=\cdots=R_{3,4}$. The computational cost involved in Algorithm \ref{FCTNNLR} and Algorithm \ref{AFCTNNLR} consists of two parts: updating $\mathcal{A}_{k}$ and updating $\mathcal{X}$, where $k=1,2,3,4$.
First, updating $\mathcal{A}_{k}$ involves computing $\mathcal{M}_k$, matrix eigenvalue decomposition, one-dimensional FFT, matrix multiplication, component-wise multiplication, and component-wise division, where the computational costs of these operations are represented by computing $\mathcal{M}_k$, SVD, FFT, matrix multiplication, CWM, and CWD, respectively. Second, updating $\mathcal{X}$ is denoted by computing $\mathcal{X}$. With the above assumptions and notations, we derive the computational complexities of one full iteration of Algorithm \ref{FCTNNLR} and Algorithm \ref{AFCTNNLR} for fourth-order tensors, as shown in Table \ref{tab_1}.

\begin{table}[ht]
\centering
\fontsize{5}{15}\selectfont
\setlength{\abovecaptionskip}{0pt}
\setlength{\belowcaptionskip}{0pt}
\caption{Computational complexities of Algorithm \ref{FCTNNLR} and Algorithm \ref{AFCTNNLR} for fourth-order tensors.}\label{tab_1}
\begin{tabular}{c|c|c|c|c|c|c|c}
\hline
 $t=1$ &Computing $\mathcal{M}_k$ & SVD & FFT & Matrix multiplication & CWM & CWD & Computing $\mathcal{X}$ \\
\hline
Algorithm \ref{FCTNNLR} & $8(I^2+I^3)R^5$  & $\frac{16}{3}R^9$ &20$(I^2+IR^{3})logI$ & $8(I^3R^6+I^4R^3)$ &$4IR^3$   & $4IR^3$ & $2(I^2+I^3)R^5+2I^4R^3$ \\
\hline
Algorithm \ref{AFCTNNLR} & $4I^2R^5+8I^3R^5$ & $\frac{16}{3}R^9$ &20$(I^2+IR^{3})logI$ &$8(I^3R^6+I^4R^3)$  & $4IR^3$  & $4IR^3$  & $2I^4R^3$  \\
\hline
 $t\ge 2$ &Computing $\mathcal{M}_k$ & SVD & FFT & Matrix multiplication & CWM & CWD & Computing $\mathcal{X}$ \\
\hline
Algorithm \ref{FCTNNLR} & $8(I^2+I^3)R^5$  & $\frac{16}{3}R^9$ &20$(I^2+IR^{3})logI$ & $8(I^3R^6+I^4R^3)$ &$4IR^3$   & $4IR^3$ & $2(I^2+I^3)R^5+2I^4R^3$ \\
\hline
Algorithm \ref{AFCTNNLR} & $2I^2R^5+8I^3R^5$ & $\frac{16}{3}R^9$ &20$(I^2+IR^{3})logI$ &$8(I^3R^6+I^4R^3)$  & $4IR^3$  & $4IR^3$  & $2I^4R^3$  \\
\hline
\end{tabular}
\end{table}

From Table \ref{tab_1}, we can observe that the accelerated Algorithm \ref{AFCTNNLR} has lower computational complexity than Algorithm \ref{FCTNNLR} in both computing $\mathcal{M}_k$ and computing $\mathcal{X}$, with the reduction being particularly significant in the case of computing $\mathcal{X}$.

When extending to the case of an $N$th order tensor, we assume that the input tensor $\mathcal{Y}$ has the shape $I\times I \times \cdots \times I$ and its FCTN-rank is $(R,R,\cdots, R) \in \mathbb{R}^{\frac{N(N-1)}{2}}$. The remaining notations are consistent with those adopted in the analysis of the fourth-order tensor.

\begin{table}[ht]
\centering
\fontsize{5}{15}\selectfont
\setlength{\abovecaptionskip}{0pt}
\setlength{\belowcaptionskip}{0pt}
\caption{Computational complexities of Algorithm \ref{FCTNNLR} and Algorithm \ref{AFCTNNLR} for $N$th-order tensors.}\label{tab_2}
\begin{tabular}{c|c|c|c|c}
\hline
  &Computing $\mathcal{M}_k$ & SVD & FFT & Matrix multiplication \\
\hline
Algorithm \ref{FCTNNLR} & $ 2N{\textstyle \sum_{K=2}^{N-1}} I^kR^{k(N-k)+k-1}$  & $\frac{4}{3}N R^{3(N-1)}$ &$5N(I^2+IR^{(N-1)})logI$ & $2N(I^{N-1}R^{2(N-1)}+I^{N}R^{N-1})$  \\
\hline
Algorithm \ref{AFCTNNLR} & $4I^2R^{2N-3}+6I^3R^{3N-7}+$ & $\frac{4}{3}N R^{3(N-1)}$ &$5N(I^2+IR^{(N-1)})logI$ &$2N(I^{N-1}R^{2(N-1)}+I^{N}R^{N-1})$  \\
& $\cdots +2NI^{N-1}R^{2N-3}$ &   &  &\\
\hline
&  CWM & CWD & Computing $\mathcal{X}$ & \\
\hline 
Algorithm \ref{FCTNNLR}&  $NIR^{N-1}$& $NIR^{N-1}$ & $2{\textstyle \sum_{K=2}^{N}} I^kR^{k(N-k)+k-1}$  & \\
\hline 
Algorithm \ref{AFCTNNLR}& $NIR^{N-1}$ & $NIR^{N-1}$ & $2I^NR^{(N-1)} $ &  \\
\hline 
\end{tabular}
\end{table}

In Table \ref{tab_2}, where $k = 1, 2, \cdots, N$, the term $4I^2R^{2N-3}+6I^3R^{3N-7}+\cdots +2NI^{N-1}R^{2N-3}$ is given. If it contains terms identical to $I^{N-1}R^{2N-3}$, their coefficients are taken to be $2N$. For example, in the case of a fourth-order tensor, the computational complexity is $4I^2R^5+6I^3R^5$ according to the calculation of the first two terms. However, since the last term is identical to the second term, the computational complexity should be $4I^2R^5+8I^3R^5$. Under the intermediate tensor reutilization mechanism, Algorithm \ref{AFCTNNLR} significantly reduces the computational complexity of computing $\mathcal{M}$ and $\mathcal{X}$ for an $N$th order tensor, compared to Algorithm \ref{FCTNNLR}.

\subsection{Convergence analysis}
In this subsection, we analyze the theoretical convergence of the proposed Algorithm \ref{AFCTNNLR}.
For ease of analysis, we reformulate problem (\ref{model_2}) in the following form
\begin{equation}
f(\mathcal{A}_{1:N},\mathcal{X})=g(\mathcal{A}_{1:N},\mathcal{X})+\sum_{k=1}^{N}h_k(\mathcal{A}_k)+\iota_{\mathbb{D}}(\mathcal{X}),
\end{equation}
where $g(\mathcal{A}_{1:N},\mathcal{X})=\frac{1}{2}\left\|\mathcal{X}-\mathrm{FCTN}\left(\{\mathcal{A}_{k}\}_{k=1}^{N}\right)\right\|_{F}^{2}$, $h_k(\mathcal{A}_k)=\frac{\lambda_{k}}{2}\mathrm{tr}(\mathbf{A}^T_{k(k)}\mathbf{L}_k\mathbf{A}_{k(k)})$. We analyze the convergence guarantee of Algorithm \ref{AFCTNNLR} based on Theorem \ref{theorem_2}. 

\begin{theorem}\label{theorem_2}
    The sequence $\left \{ \mathcal{A}_{1:N}^{(t)},\mathcal{X}^{(t)}  \right \}_{s\in \mathbb{N} }$ generated by the Agorithm \ref{AFCTNNLR}, it converges to a critical point of $f(\mathcal{A}_{1:N},\mathcal{X})$.
\end{theorem}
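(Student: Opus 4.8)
The plan is to recognize Algorithm \ref{AFCTNNLR} as a block proximal alternating minimization (PAM) scheme on the variable blocks $(\mathcal{A}_1,\ldots,\mathcal{A}_N,\mathcal{X})$ and to invoke the abstract convergence theory for Kurdyka--{\L}ojasiewicz (KL) functions \cite{Attouch2013}. First I would argue that the intermediate-tensor reuse mechanism and the unordered update do not change the mathematics of the iteration: each block is still updated exactly once per outer sweep by the \emph{exact} minimizer of its proximal subproblem (the Sylvester solution (\ref{eq_8})--(\ref{eq_9}) for the factors and the closed form (\ref{eq_10}) for $\mathcal{X}$), so the generated sequence coincides with that of a block-permuted PAM iteration applied to $f$. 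The proof then reduces to verifying the three standard ingredients — a sufficient-decrease inequality (H1), a relative-error/subgradient bound (H2), and the KL property of $f$ — together with boundedness of the iterates, after which the abstract theorem delivers convergence to a critical point.

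For (H1), because every subproblem is solved exactly and carries a proximal term $\tfrac{\rho}{2}\|\cdot-\cdot^{(t)}\|_F^2$ with $\rho>0$, comparing the optimal value against the value at the previous iterate yields the descent inequality
\[
f(\mathcal{A}_{1:N}^{(t+1)},\mathcal{X}^{(t+1)})+\frac{\rho}{2}\Big(\sum_{k=1}^{N}\|\mathcal{A}_k^{(t+1)}-\mathcal{A}_k^{(t)}\|_F^2+\|\mathcal{X}^{(t+1)}-\mathcal{X}^{(t)}\|_F^2\Big)\le f(\mathcal{A}_{1:N}^{(t)},\mathcal{X}^{(t)}),
\]
which holds without any convexity of the individual blocks. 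Summing this telescoping bound over $t$ shows the successive differences are square-summable, hence $\|z^{(t+1)}-z^{(t)}\|_F\to 0$ with $z^{(t)}:=(\mathcal{A}_{1:N}^{(t)},\mathcal{X}^{(t)})$. For (H2), I would write the first-order stationarity condition of each subproblem — the Sylvester equation (\ref{model_5}) is exactly $\nabla_{\mathcal{A}_k}[g+h_k]+\rho(\mathcal{A}_k^{(t+1)}-\mathcal{A}_k^{(t)})=0$ evaluated at the Gauss--Seidel mixture of old and new blocks — and rearrange it to exhibit an element of the limiting subdifferential $\partial f(z^{(t+1)})$ whose norm is bounded by a constant times $\|z^{(t+1)}-z^{(t)}\|_F$. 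The mechanism here is that the coupling term $g$ is a polynomial with locally Lipschitz gradient, so on a bounded region the difference between $\nabla_{\mathcal{A}_k}g$ evaluated at $z^{(t+1)}$ and at the mixed argument is controlled by the block increments.

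The KL property is where the model structure pays off: $g(\mathcal{A}_{1:N},\mathcal{X})=\tfrac12\|\mathcal{X}-\mathrm{FCTN}(\{\mathcal{A}_k\})\|_F^2$ is a polynomial in all entries, each $h_k=\tfrac{\lambda_k}{2}\operatorname{tr}(\mathbf{A}_{k(k)}^T\mathbf{L}_k\mathbf{A}_{k(k)})$ is a quadratic form, and $\iota_{\mathbb{D}}$ is the indicator of the affine set $\mathbb{D}$; all are semialgebraic, so $f$ is semialgebraic and therefore a KL function. With (H1), (H2), the KL property, and boundedness of $\{z^{(t)}\}$ established, the abstract theorem gives finite trajectory length $\sum_t\|z^{(t+1)}-z^{(t)}\|_F<\infty$, hence convergence to a single limit $z^\ast$; closedness of $\partial f$ combined with the vanishing subgradient bound from (H2) then forces $0\in\partial f(z^\ast)$, i.e.\ $z^\ast$ is a critical point.

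The main obstacle I anticipate is establishing descent-compatible coercivity and boundedness of the iterates, and it is twofold. First, the FCTN parametrization carries a continuous scaling ambiguity (rescaling one factor and inversely rescaling a connected factor leaves the product $\mathrm{FCTN}(\{\mathcal{A}_k\})$ unchanged), so the factors could in principle drift even while the reconstruction stays controlled; moreover $\mathbf{L}_k$ is not positive definite, so the regularizer alone does not penalize this drift. I would close this gap by requiring the proximal parameter $\rho$ to be large enough that each subproblem Hessian $\rho\mathbf{I}+\lambda_k\mathbf{I}\otimes\mathbf{L}_k+\mathbf{M}_k(\mathbf{M}_k)^T\otimes\mathbf{I}$ stays positive definite (guaranteeing a unique well-defined minimizer and genuine decrease), and then using the monotone decrease of $f$ together with the data term on the feasible set to confine $\{z^{(t)}\}$ to a bounded sublevel set. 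Second, the shuffled update order must be handled so the constant in (H2) is assembled correctly across a permuted sweep; since only finitely many permutations occur and each block is refreshed once per sweep with the most recent values, the Lipschitz constants can be taken uniform over these finitely many orders, which preserves both (H1) and (H2) and leaves the KL argument intact.
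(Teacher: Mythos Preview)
Your approach is essentially the paper's: both invoke the PAM convergence theory of \cite{Attouch2013} by checking sufficient decrease, a relative-error subgradient bound, the KL property via semialgebraicity, and boundedness of the iterates. The only substantive difference is that the paper establishes boundedness by a direct inductive estimate on the closed-form updates (\ref{eq_8})--(\ref{eq_10}) rather than by coercivity, and it does not address the shuffled block order or the Lipschitz correction needed to pass (H2) from the Gauss--Seidel mixed point to the full new iterate---points on which your plan is in fact more careful.
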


\begin{proof}
According to Section 6 in \cite{Attouch2013}, to prove Theorem \ref{theorem_2}, we only need to verify that the following five conditions are satisfied.
   
1)The sequence $\left \{ \mathcal{A}_{1:N}^{(t)},\mathcal{X}^{(t)}  \right \}_{s\in \mathbb{N} }$ is bounded;

2)$f(\mathcal{A}_{1:N},\mathcal{X})$ has the  Kurdyka-Łojasiewicz (K-Ł) property at $\left \{ \mathcal{A}_{1:N}^{(t)},\mathcal{X}^{(t)}  \right \}_{s\in \mathbb{N} }$;

3) $g(\mathcal{A}_{1:N},\mathcal{X})+\sum_{k=1}^{N}h_k(\mathcal{A}_k)$ is a $C^1$ function whose gradient is Lipschitz contiuous, and $\iota_{\mathbb{D}}(\mathcal{X})$ is proper
lower semi-continuous functions;

4)\emph{Sufficient Decrease Condition}: For each $s\in \mathbb{N}$, there exists $\varphi \in (0,\infty )$ such the sequence $\left \{ \mathcal{A}_{1:N}^{(t)},\\ \mathcal{X}^{(t)} \right \}_{s\in \mathbb{N} }$ satisfies
$$
f(\mathcal{A}_{1:k}^{(t+1)},\mathcal{A}_{k+1:N}^{(t)},\mathcal{X}^{(t)})+\varphi | |\mathcal{A}_{k}^{(t+1)}- \mathcal{A}_{k}^{(t)}||_F^2 \le f(\mathcal{A}_{1:k-1}^{(t+1)},\mathcal{A}_{k:N}^{(t)},\mathcal{X}^{(t)}),
$$
$$
f(\mathcal{A}_{1:N}^{(t+1)},\mathcal{X}^{(t+1)})+\varphi ||\mathcal{X}_{k}^{(t+1)}- \mathcal{X}_{k}^{(t)}\|_F^2 \le f(\mathcal{A}_{1:N}^{(t+1)},\mathcal{X}^{(t)}),
$$
where $k=1,2,\cdots,N$;

5)\emph{Relative error condition}: For each $s\in \mathbb{N}$, there exists
$$\mathcal{B}_k^{(t+1)}\in \partial _{\mathcal{A}_k }f(\mathcal{A}_{1:k-1}^{(t+1)},\mathcal{A}_{k+1:N}^{(t)},\mathcal{X}^{(t)}),$$
$$
\mathcal{C}_k^{(t+1)} \in \partial _{\mathcal{X}} f(\mathcal{A}_{1:N}^{(t+1)},\mathcal{X}^{(t+1)}),
$$
and a constant $a\in (0,\infty )$ such that 
$$
||\mathcal{B}_k^{(t+1)}||_F \le a||\mathcal{A}_{k}^{(t+1)}- \mathcal{A}_{k}^{(t)}||_F,
$$
$$
||\mathcal{C}_k^{(t+1)}||_F \le a||\mathcal{X}_{k}^{(t+1)}- \mathcal{X}_{k}^{(t)}\|_F,
$$
where $k=1,2,\cdots,N$.

\emph{For condition (1)}: In Agorithm \ref{AFCTNNLR}, the elements of the initial $\mathcal{A}_{k}^{(1)}$ follow a standard normal distribution. Thus, the initial $\mathcal{A}_{k}^{(1)}$ is bounded. The $\mathcal{X}^{(1)}=\mathcal{Y}$ and the observed tensor $\mathcal{Y}$ is bounded, the initial is bounded. Assuming $\mathcal{A}_{k}^{(t)}$and $\mathcal{X}^{(t)}$ are both bounded, and satisfy $||\mathcal{A}_{k}^{(t)}||_F\le b$, $||\mathcal{X}^{(t)}||_F\le c$, where $b$ and $c$ are constants. Next, we prove that$||\mathcal{A}_{k}^{(t+1)}||_F$ and $||\mathcal{X}^{(t+1)}||_F$ are also bounded. According to the properties of the kronecker product, we can convert $\mathcal{A}_{k}^{(t+1)}$ into the following vector form
$$
\begin{aligned}
vec(\mathbf{A}_{k(k)}^{(t+1)})=&(\lambda_k\mathbf{I}_{s_k}\otimes\mathbf{L}_k+\mathbf{M}_{k}^{(t)}(\mathbf{M}^{(t)}_{k})^T\otimes\mathbf{I}_{q_k}+\rho\mathbf{I}_{s_kq_k})^{-1}vec(\mathbf{Y}^{(t)})\\
=&((\mathbf{C}\otimes\mathbf{F}^{\text{H}})(\lambda_k\mathbf{I}_{s_k}\otimes\Lambda+\Phi \otimes\mathbf{I}_{q_k}+\rho\mathbf{I}_{s_k}\otimes\mathbf{I}_{q_k})(\mathbf{C}^T\otimes\mathbf{F}))^{-1}vec(\mathbf{Y}^{(t)})\\
=&(\mathbf{C}\otimes\mathbf{F}^{\text{H}})(\lambda_k\mathbf{I}_{s_k}\otimes\Lambda+\Phi \otimes\mathbf{I}_{q_k}+\rho\mathbf{I}_{s_k}\otimes\mathbf{I}_{q_k})^{-1}(\mathbf{C}^T\otimes\mathbf{F})vec(\mathbf{Y}^{(t)}),
\end{aligned}
$$
where $\mathbf{Y}^{(t)}=\mathbf{X}^{(t)}_{(k)}\mathbf{M}^{(t)}_{k})^T+\rho\mathbf{A}^{(t)}_{k(k)}$.
Assume that the $i$-th element on the main diagonal of diagonal matrix $\lambda_k\mathbf{I}_{s_k}\otimes\Lambda+\Phi \otimes\mathbf{I}_{q_k}+\rho\mathbf{I}_{s_k}\otimes\mathbf{I}_{q_k}$ is $\varepsilon_i$ for $i=1,2,\cdots,s_kq_k$, then $\varepsilon_i \ge \rho >0$ as presented in Theorem 2 of reference \cite{Zheng2022}. Consequently, we have 
$$
\begin{aligned}
||\mathcal{A}_{k}^{(t+1)}||_F=&||vec(\mathbf{A}_{k(k)}^{(t+1)})||_F\\
=&||(\mathbf{C}\otimes\mathbf{F}^{\text{H}})(\lambda_k\mathbf{I}_{s_k}\otimes\Lambda+\Phi \otimes\mathbf{I}_{q_k}+\rho\mathbf{I}_{s_k}\otimes\mathbf{I}_{q_k})^{-1}(\mathbf{C}^T\otimes\mathbf{F})vec(\mathbf{Y}^{(t)})||_F\\
\le &||(\mathbf{C}\otimes\mathbf{F}^{\text{H}})(\lambda_k\mathbf{I}_{s_k}\otimes\Lambda+\Phi \otimes\mathbf{I}_{q_k}+\rho\mathbf{I}_{s_k}\otimes\mathbf{I}_{q_k})^{-1}(\mathbf{C}^T\otimes\mathbf{F})||_F ||vec(\mathbf{Y}^{(t)})||_F\\
=&||\lambda_k\mathbf{I}_{s_k}\otimes\Lambda+\Phi \otimes\mathbf{I}_{q_k}+\rho\mathbf{I}_{s_k}\otimes\mathbf{I}_{q_k})^{-1}||_F ||vec(\mathbf{Y}^{(t)})||_F\\
=&\sqrt{\sum_{i=1}^{s_kq_k}\frac{1}{\varepsilon_i^2 }} ||vec(\mathbf{Y}^{(t)})||_F \\
\le& \sqrt{\sum_{i=1}^{s_kq_k}\frac{1}{\rho^2 }} ||vec(\mathbf{Y}^{(t)})||_F\\
=&\frac{\sqrt{s_kq_k}}{\rho}(||\mathbf{X}^{(t)}_{(k)}\mathbf{M}^{(t)}_{k})^T+\rho\mathbf{A}^{(t)}_{k(k)}||_F)\\
\le &\frac{\sqrt{s_kq_k}}{\rho}(||\mathcal{X}^{(t)}||_F||\mathcal{M}_{k}^{(t)}||_F+||\mathcal{A}_k^{(t)}||_F)\\
\le&\frac{\sqrt{s_kq_k}}{\rho}(cb^{N-1}+\rho b).
\end{aligned}
$$
Therefore, for $k=1,2,\cdots,N$, $\mathcal{A}_{k}^{(t+1)}$ is bounded and we assume  $||\mathcal{A}_k^{(t+1)}||_F \le d$ , where $d$ is a constant. According to (\ref{eq_10}), we have 
$$
\begin{aligned}
||\mathcal{X}^{(t+1)}||_F=&||\mathcal{P}_{\Omega ^{c}}\left( \frac{\operatorname{FCTN}\left(\left\{\mathcal{A}_{k}^{(t+1)}\right\}_{k=1}^{N}\right)+\rho\mathcal{X}^{(t)}}{1+\rho}\right)+ \mathcal{P}_{\Omega}(\mathcal{Y})||_F \\
\le &||\mathcal{P}_{\Omega ^{c}}\left( \frac{\operatorname{FCTN}\left(\left\{\mathcal{A}_{k}^{(t+1)}\right\}_{k=1}^{N}\right)+\rho\mathcal{X}^{(t)}}{1+\rho}\right)||_F+ \mathcal{P}_{\Omega}(\mathcal{Y})||_F\\
\le & ||  \frac{\operatorname{FCTN}\left(\left\{\mathcal{A}_{k}^{(t+1)}\right\}_{k=1}^{N}\right)+\rho\mathcal{X}^{(t)}}{1+\rho}||_F +||\mathcal{Y}||_F \\
\le& \frac{||\operatorname{FCTN}\left(\left\{\mathcal{A}_{k}^{(t+1)}\right\}_{k=1}^{N}\right)||_F+\rho||\mathcal{X}^{(t)}||_F}{1+\rho}+||\mathcal{Y}||_F.
\end{aligned}
$$
As presented in Algorithm \ref{AFCTNNLR}, the $\operatorname{FCTN}\left(\left\{\mathcal{A}_{k}^{(t+1)}\right\}_{k=1}^{N}\right)$ can be formed by tensor contraction among FCTN factors $\mathcal{A}_1^{(t+1)}, \mathcal{A}_2^{(t+1)},\cdots, \mathcal{A}_N^{(t+1)}$. Thus, we have
$$
\begin{aligned}
||\mathcal{X}^{(t+1)}||_F \le& \frac{||\operatorname{FCTN}\left(\left\{\mathcal{A}_{k}^{(t+1)}\right\}_{k=1}^{N}\right)||_F+\rho||\mathcal{X}^{(t)}||_F}{1+\rho}+||\mathcal{Y}||_F \\
\le & \frac{ {\textstyle \prod_{k=1}^{N}||\mathcal{A}_{k}^{(t+1)}||_F} +\rho ||\mathcal{X}^{(t)} ||_F}{1+\rho}+||\mathcal{Y}||_F\le (d^N+\rho c)/(1+\rho)+ ||\mathcal{Y}||_F.
\end{aligned}
$$
Consequently, the $\mathcal{X}^{(t+1)}$ is bounded and condition (1) holds.

\emph{For condition (2)}: According to \cite{Attouch2013}, the semialgebraic functions has K-Ł property. The Frobenius norm function, trace function, and indicator function are  semialgebraic functions, and the sum of semialgebraic functions is also semialgebraic functions. Therefore, the function $f(\mathcal{A}_{1:N},\mathcal{X})$ is semialgebraic function and condition (2) holds.

\emph{For condition (3)}: We know that functions $g((\mathcal{A}_{1:N},\mathcal{X}))$ and $h_k(\mathcal{A}_k)$ are the Frobenius norm function and  trace function, respectively.
The Frobenius norm function and trace function are $C^1$ functions with Lipschitz continuous gradient, and $\iota_{\mathbb{D}}(\mathcal{X})$ as an indicator function is proper lower semi-continuous. Consequently, $g(\mathcal{A}_{1:N},\mathcal{X})+\sum_{k=1}^{N}h_k(\mathcal{A}_k)$ is a $C^1$ function with Lipschitz contiuous gradient, and $\iota_{\mathbb{D}}(\mathcal{X})$ is proper lower semi-continuous functions. Therefore, condition (3) holds.

\emph{For condition (4)}: The $\mathcal{A}_k^{(t+1)}$ is the optimal solution of the $\mathcal{A}_k^{(t)}$ subproblem, we have 
$$
\begin{aligned}
f(\mathcal{A}_{1:k}^{(t+1)},\mathcal{A}_{k+1:N}^{(t)},\mathcal{X}^{(t)})+&\frac{\varphi}{2} | |\mathcal{A}_{k}^{(t+1)}- \mathcal{A}_{k}^{(t)}||_F^2 \\
\le & f(\mathcal{A}_{1:k-1}^{(t+1)},\mathcal{A}_{k:N}^{(t)},\mathcal{X}^{(t)})+\frac{\varphi}{2} | |\mathcal{A}_{k}^{(t)}- \mathcal{A}_{k}^{(t)}||_F^2\\
=&f(\mathcal{A}_{1:k-1}^{(t+1)},\mathcal{A}_{k:N}^{(t)},\mathcal{X}^{(t)}),
\end{aligned}
$$
where $k=1,2,\cdots,N$. Similarly, the $\mathcal{X}^{(t+1)}$ is optimal the solution of the $\mathcal{X}_k^{(t)}$ subproblem, we have
$$
\begin{aligned}
f(\mathcal{A}_{1:N}^{(t+1)},\mathcal{X}^{(t+1)})+\varphi ||\mathcal{X}_{k}^{(t+1)}- \mathcal{X}_{k}^{(t)}\|_F^2 \le f(\mathcal{A}_{1:N}^{(t+1)},\mathcal{X}^{(t)}).
\end{aligned}
$$
Thus, condition (4) holds.

\emph{For condition (5)}: For each subproblem, we have
$$
\begin{array}{l}
0 \in \partial_{\mathcal{A}_{k}} f\left(\mathcal{A}_{1: k}^{(t+1)}, \mathcal{A}_{k+1: N}^{(t)}, \mathcal{X}^{(t)}\right)+\rho\left(\mathcal{A}_{k}^{(t+1)}-\mathcal{A}_{k}^{(t)}\right), \\
0 \in \partial_{\mathcal{X}} f\left(\mathcal{A}_{1: N}^{(t+1)}, \mathcal{X}^{(t+1)}\right)+\rho\left(\mathcal{X}_{k}^{(t+1)}-\mathcal{X}_{k}^{(t)}\right),
\end{array}
$$
where $k=1,2,\cdots,N$. Letting
$$
\begin{array}{l}
\mathcal{B}_{k}^{(t+1)}=-\rho\left(\mathcal{A}_{k}^{(t+1)}-\mathcal{A}_{k}^{(t)}\right) \in \partial_{\mathcal{A}_{k}} f\left(\mathcal{A}_{1: k}^{(t+1)}, \mathcal{A}_{k+1: N}^{(t)}, \mathcal{X}^{(t)}\right), \\
\mathcal{C}^{(t+1)}=-\rho\left(\mathcal{X}_{k}^{(t+1)}-\mathcal{X}_{k}^{(t)}\right) \in \partial_{\mathcal{X}} f\left(\mathcal{A}_{1: N}^{(t+1)}, \mathcal{X}^{(t+1)}\right),
\end{array}
$$
then we gain 
$$
\begin{array}{l}
\left\|\mathcal{B}_{k}^{(t+1)}\right\|_{F}=\left\|-\rho\left(\mathcal{A}_{k}^{(t+1)}-\mathcal{A}_{k}^{(t)}\right)\right\|_{F} \leq \rho\left\|\mathcal{A}_{k}^{(t+1)}-\mathcal{A}_{k}^{(t)}\right\|_{F}, \\
\left\|\mathcal{C}^{(t+1)}\right\|_{F}=\left\|-\rho\left(\mathcal{X}^{(t+1)}-\mathcal{X}^{(t)}\right)\right\|_{F} \leq \rho\left\|\mathcal{X}^{(t+1)}-\mathcal{X}^{(t)}\right\|_{F} .
\end{array}
$$
Therefore, condition (5) holds.
\end{proof}

\section{Numerical Experiments}\label{sect_4}
In this section, we evaluate the performance of the proposed AFCTNLR method. We compare our method with the following five methods under color videos and multi-temporal hyperspectral images, which are TMAC \cite{xu2015parallel}, TRLRF \cite{yuan2019tensor}, TRGFR \cite{wu2023tensor}, FCTN \cite{Zheng2021}, and FCTNFR \cite{Zheng2022} respectively. To assess the quality of the reconstructed results, we employ two evaluation metrics, the peak signal-to-noise ratio (PSNR) and the structural similarity index (SSIM). Their definitions are given as follows
$$
\operatorname{PSNR}=\frac{1}{I_{3} I_{4}} \sum_{i_{3}=1}^{I_{3}} \sum_{i_{4}=1}^{I_{4}} 10 \log _{10} \frac{\mathrm{Max}_{i_{3}, i_{4}}^{2}}{\frac{1}{I_{1} I_{2}}\left\|\mathbf{X}_{\mathrm{T}}^{i_{3}, i_{4}}-\bar{\mathbf{X}}^{i_{3}, i_{4}}\right\|_{F}^{2}}
$$
\noindent and
$$
\mathrm{SSIM}=\frac{1}{I_{3} I_{4}} \sum_{i_{3}=1}^{I_{3}} \sum_{i_{4}=1}^{I_{4}} \frac{\left(2 v\left(\mathbf{X}_{\mathrm{T}}^{i_{3}, i_{4}}\right) v\left(\bar{\mathbf{X}}^{i_{3}, i_{4}}\right)+c_{1}\right)\left(2 \omega\left(\mathbf{X}_{\mathrm{T}}^{i_{3}, i_{4}}, \bar{\mathbf{X}}^{i_{3}, i_{4}}\right)+c_{2}\right)}{\left(v\left(\mathbf{X}_{\mathrm{T}}^{i_{3}, i_{4}}\right)^{2}+v\left(\bar{\mathbf{X}}^{i_{3}, i_{4}}\right)^{2}+c_{1}\right)\left(\omega\left(\mathbf{X}_{\mathrm{T}}^{i_{3}, i_{4}}\right)^{2}+\omega\left(\bar{\mathbf{X}}^{i_{3}, i_{4}}\right)^{2}+c_{2}\right)},
$$
where $\mathcal{X}_{\mathrm{T}}\in \mathbb{R}^{I_1\times I_2 \times I_3 \times I_4}$ is the complete tensor, $\bar{\mathcal{X}}\in \mathbb{R}^{I_1\times I_2 \times I_3 \times I_4}$ is the reconstructed tensor. $\mathbf{X}^{i_3,i_4}_{\mathrm{T}} \in \mathbb{R}^{I_1 \times I_2}$ denotes a slice of $\mathcal{X}_{\mathrm{T}}$ obtained by fixing the indices $i_3$ and $i_4$, and $\bar{\mathbf{X}}_{i_3,i_4} \in \mathbb{R}^{I_1 \times I_2}$ denotes a slice of $\bar{\mathcal{X}}$ obtained similarly. $\text{Max}_{i_3,i_4}$ represents the maximum possible pixel value of $\mathbf{X}_{\text{T}}^{i_3,i_4}$ (here $\text{Max}_{i_3,i_4}=1$ because the pixel values of each test dataset are normalized to $[0,1]$). $\nu(\mathbf{X}_{\text{T}}^{i_3,i_4})$ and $\nu(\bar{\mathbf{X}}_{i_3,i_4})$ are the mean values of $\mathbf{X}_{\text{T}}^{i_3,i_4}$ and $\bar{\mathbf{X}}_{i_3,i_4}$, respectively. $\omega(\mathbf{X}_{\text{T}}^{i_3,i_4}, \bar{\mathbf{X}}_{i_3,i_4})$ denotes the covariance between $\mathbf{X}_{\text{T}}^{i_3,i_4}$ and $\bar{\mathbf{X}}_{i_3,i_4}$, while $\omega(\mathbf{X}_{\text{T}}^{i_3,i_4})$ and $\omega(\bar{\mathbf{X}}_{i_3,i_4})$ represent their standard deviations. $c_1$ and $c_2$ are constants.

All experiments were conducted in a Windows 10 and MATLAB environment, on a desktop computer equipped with a 12th Gen Intel(R) Core(TM) i7-12700 (2.10 GHz) processor and 16 GB of RAM.

\subsection{Color video} \label{sub_4.1}

In the first half of this subsection, we test six relatively old color videos\footnote{\url{http://trace.eas.asu.edu/yuv/}}, which are \emph{akiyo}, \emph{mother}, \emph{news}, \emph{salesman}, \emph{silent} and \emph{suzie} respectively. All these video datasets have a size of 144 $\times$ 176 $\times $ 3 $\times$ 50 (spatial height$\times$spatial width$\times$color channel$\times$frame number).  Our experiments are mainly conducted under the conditions of 5\%, 10\%, and 20\% sampling rates.

\emph{Parameter setting}: For a fourth-order tensor $\mathcal{X}^{I_1 \times I_2 \times I_3 \times I_4}$, the parameters that need to be considered for the proposed algorithm AFCTNLR include the maximal FCTN-rank $\mathbf{r}^{\text{max}}_{\text{FCTN}}=(R^{\text{max}}_{1,2}, R^{\text{max}}_{1,3} ,R^{\text{max}}_{1,4}, R^{\text{max}}_{2,3},$ $R^{\text{max}}_{2,4}, R^{\text{max}}_{3,4})$, the perturbation coefficient $\delta_k $ of the periodic perturbation matrix $\mathbf{L}_k$ and the regularization parameter $\lambda_k$,  where $k=1,2,3,4$. 
For the maximal FCTN-rank $\mathbf{r}^{\text{max}}_{\text{FCTN}}=(R^{\text{max}}_{1,2}, R^{\text{max}}_{1,3}, R^{\text{max}}_{1,4}, R^{\text{max}}_{2,3}, R^{\text{max}}_{2,4}, R^{\text{max}}_{3,4})$, we follow the configuration rules specified in reference \cite{Zheng2022}. Additionally, we set $\delta_k$ (for $k=1,2,3,4$) to the same value $\delta$, and $\lambda_k$ (for $k=1,2,3,4$) to the same value $\lambda$. Specifically, $\delta$ is selected from the candidate set $\left \{0.3, 0.4, 0.5, 0.6 \right \}$, while $\lambda$  is chosen from the candidate set $\left \{0.25, 0.35, 0.45, 0.55 \right \}$. 

In Tables \ref{tab_3}–\ref{tab_5}, we present the PSNR, SSIM, and running time (in seconds) of different algorithms on old color videos after 500 iterations, with the best results highlighted in bold. The choice of 500 iterations is made to ensure sufficient iterations for each algorithm, as their convergence conditions differ. We observe that among all the results, our proposed algorithm AFCTNLR achieves the best PSNR and SSIM values after 500 iterations, with improvements of nearly 1 dB in PSNR and 0.015 in SSIM over the FCTNFR algorithm in most cases. Although it does not have the shortest runtime, it reduces the running time by 10\%–15\% compared to the FCTNFR algorithm in most cases. To provide a more intuitive comparison of the reconstruction quality, we select the first frames of the reconstructed color videos \emph{salesman} and \emph{suize} under a sampling rate of 5\% for comparison, with the results shown in Figure (\ref{fig_7}).  The presented results demonstrate that our proposed algorithm AFCTNLR achieves superior image quality compared with the competing methods. Specifically, our algorithm is able to better preserve video details while recovering missing elements, as evidenced by the lower noise in the reconstructed images. In contrast, the reconstructed images from the competing methods still contain considerable noise, indicating a greater loss of local details during their reconstruction.

\begin{table}[!ht]
  \centering
  \fontsize{8}{9.25}\selectfont
  \setlength{\abovecaptionskip}{0pt}
  \setlength{\belowcaptionskip}{0pt}
  \caption{The performance on the old video with SR = 5\%} \label{tab_3}  
  \begin{tabular}{ccccccccc}  
    \hline  
    Video & Index & Observed & TMAC & TRLFR & TRGFR  & FCTN & FCTNFR & AFCTNLR  \\  
    \hline
     \emph{akiyo} & PSNR &7.3327  & 26.5866 &27.3712  &34.0454  &31.8108  &{35.3092}  &\textbf{36.7600}  \\  
          & SSIM &0.0159  & 0.7764 & 0.7896 & 0.9427 &0.9010  &{0.9618}  & \textbf{0.9743} \\ 
          & TIME(s)& & \textbf{53.8} & 229.6 & 736.9 &259.2 & 834.4 & 724.2  \\
    \emph{mother} & PSNR & 6.5874 & 25.4645 &28.4549  &34.5454  &32.7799  & {35.3139} &\textbf{36.4283}  \\  
          & SSIM & 0.0092 & 0.7066 & 0.7551 & 0.9089 &0.8534  & {0.9223} &\textbf{0.9402}  \\ 
          & TIME(s) &  & \textbf{53.3} & 234.1 &750.5 &268.3  &850.7& 758.2 \\
    \emph{news} & PSNR & 8.7488 & 23.8782 & 23.3870 & 29.1647 &27.0870  & {29.800} & \textbf{31.1363} \\  
          & SSIM & 0.0214 & 0.6714 & 0.6742 & 0.8648 & 0.7778 & {0.8852} & \textbf{0.9190} \\ 
          & TIME(s) &  & \textbf{46.8} & 223.4 & 700.4 & 200.3 & 821.2 & 774.5 \\
    \emph{salesman}& PSNR & 8.3215 & 22.7242 & 24.1370 & 29.5725 & 28.1512 &{30.2400}  & \textbf{31.7601} \\  
          & SSIM & 0.0147 &0.6885  & 0.6925 &0.8952  &0.8637  &{0.9189}  & \textbf{0.9376} \\ 
          & TIME(s) &  & \textbf{46.9} & 210.3 & 674.6 & 196.7 & 722.1 & 614.1 \\
    \emph{silent}& PSNR & 6.0500 &23.4658  &25.7317 & 31.9366 & 29.8580 & {33.1109} & \textbf{33.5041} \\  
          & SSIM & 0.0095 &0.6935  &0.7017  &0.9015  & 0.8455 & {0.9256} &\textbf{0.9350}  \\ 
          & TIME(s) &  & \textbf{54.2} & 254.2 & 816.8 & 288.5 & 973.7 & 875.2 \\
    \emph{suzie}& PSNR & 7.2362 & 22.7213 & 25.7910 & 31.0280 & 28.4288 & {30.9118} &\textbf{32.6744}  \\  
          & SSIM &  0.0100& 0.6140 &0.6944  & 0.8447 & 0.7274 & {0.8450} & \textbf{0.8926} \\ 
          & TIME(s) & & \textbf{46.8} & 248.3 & 743.1& 255.1 & 827.4 &717.8  \\
    \hline  
  \end{tabular}
\end{table}

\begin{table}[!ht]
  \centering
  \fontsize{8}{9.25}\selectfont
  \setlength{\abovecaptionskip}{0pt}
  \setlength{\belowcaptionskip}{0pt}
  \caption{The performance on the old video with SR = 10\%}\label{tab_4}    
  \begin{tabular}{ccccccccc}  
    \hline  
    Video & Index & Observed & TMAC & TRLFR & TRGFR  & FCTN & FCTNFR & AFCTNLR  \\  
    \hline
     \emph{akiyo} & PSNR &7.5609  & 33.4403 & 28.3448 & 37.2827 &36.2407  &{39.0102}  &\textbf{40.1701}  \\  
          & SSIM & 0.0249 & 0.9288 & 0.8267 &0.9694  &0.9585  &{0.9816}  &\textbf{0.9875}  \\ 
          & TIME(s) &  & \textbf{51.3} &232.3  & 749.4 & 282.4 &869.2  &763.7  \\
    \emph{mother} & PSNR &6.8220  & 34.3426 &29.3639  &36.7835  &36.7312  &{38.2051}  &\textbf{38.8727}  \\  
          & SSIM &0.0141  &0.9015  & 0.7895 & 0.9400 & 0.9294 &{0.9538}  &\textbf{0.9611}  \\ 
          & TIME(s) &  & \textbf{49.4} &218.4  & 675.6 &283.3  &750.4  &653.5  \\
    \emph{news} & PSNR &8.9848  & 26.7947 & 24.3281 & 31.2560 & 30.2554 & {32.9087} &\textbf{33.8056}  \\  
          & SSIM & 0.0336 & 0.8197 & 0.7169 & 0.9058 &0.8745  &{0.9362}  &\textbf{0.9500}  \\ 
          & TIME(s) &  & \textbf{51.7} & 234.6 & 752.5 & 238.7 & 831.0 & 733.4 \\
    \emph{salesman}& PSNR & 8.5564 & 28.1453 & 24.7456& 32.1631 & 31.4507 & {33.7308} & \textbf{34.7431} \\  
          & SSIM & 0.0268 &0.8508  & 0.7248 & 0.9369 &0.9283  &{0.9585}  & \textbf{0.9661} \\ 
          & TIME(s) &  & \textbf{63.7} & 222.2 &672.4  & 256.9 & 842.5 & 736.1 \\
    \emph{silent}& PSNR & 6.2854 & 29.9303 &  26.3893& 34.0167 & 33.6702 & {35.4642} & \textbf{36.3334} \\  
          & SSIM & 0.0153 & 0.8705 & 0.7320&0.9338  & 0.9246 & {0.9525} & \textbf{0.9615} \\ 
          & TIME(s) & &\textbf{63.0} & 237.0 & 783.1 & 297.2&922.7&792.5  \\
    \emph{suzie}& PSNR & 7.4703 & 27.1023 & 27.3146 &32.8535  & 31.9506 & {33.8984} & \textbf{35.0015} \\  
          & SSIM & 0.148 & 0.7565 & 0.7506 & 0.8839 & 0.8496 &{0.9064}  & \textbf{0.9284} \\ 
          & TIME(s) &  &\textbf{65.1}& 243.8 & 795.2 & 281.0 & 910.0 & 725.2 \\
    \hline  
  \end{tabular}
\end{table}

\begin{table}[!ht]
  \centering
  \fontsize{8}{9.25}\selectfont
  \setlength{\abovecaptionskip}{0pt}
\setlength{\belowcaptionskip}{0pt}
  \caption{The performance on the old video with SR = 20\%} \label{tab_5}  
  \begin{tabular}{ccccccccc}  
    \hline  
    Video & Index & Observed & TMAC & TRLFR & TRGFR  & FCTN & FCTNFR & AFCTNLR  \\  
    \hline
     \emph{akiyo} & PSNR &8.0779  & 34.8007 & 29.2302 & 39.7459  &40.7173  &{43.0130}  &\textbf{44.1559} \\  
          & SSIM & 0.0407 &0.9472 & 0.8528 &0.9811 &0.9827  &{0.9915}  &\textbf{0.9943}  \\ 
          & TIME(s) &  & \textbf{53.2} & 238.3 &782.2  &315.4&748.4 &591.5  \\
    \emph{mother} & PSNR & 7.3342 & 35.4304 & 30.5124 & 38.6087 & 39.5841 & {40.7932} & \textbf{41.6087} \\  
          & SSIM &  & 0.230 & 0.9226 & 0.8297 & 0.9617 &{0.9717}  &\textbf{0.9771}  \\ 
          & TIME(s) &  &\textbf{67.1}  &233.0  & 786.9 &  330.3& 882.3 & 796.2 \\
    \emph{news} & PSNR & 9.4953 & 28.2150 & 25.1692 &32.9193  & 32.8900 & {36.0432} &\textbf{36.5561}  \\  
          & SSIM &0.0655  &0.8694  & 0.7564 & 0.9319 &0.9262  & {0.9659} &\textbf{0.9714}  \\ 
          & TIME(s) &  &\textbf{65.2}  & 223.8 & 757.4& 259.0 &859.6  & 753.6 \\
    \emph{salesman}& PSNR & 9.0680 &29.2977  &25.5391  & 33.9426 & 34.1305 &{37.5741}  &\textbf{37.9341}  \\  
          & SSIM & 0.0513 & 0.8827 & 0.7628 & 0.9565 &0.9575  & {0.9804} & \textbf{0.9821} \\ 
          & TIME(s) &  & \textbf{70.6} &  251.4&859.9  & 304.4 & 960.6 & 759.2 \\
    \emph{silent}& PSNR & 6.7980 & 31.3768 & 27.7742& 35.8908 &36.6090  &{38.4584}  &\textbf{39.4025}  \\  
          & SSIM & 0.0256 &0.9043 & 0.7919 & 0.9541 & 0.9581 &{0.9736}  &\textbf{0.9790}  \\ 
          & TIME(s) &  &\textbf{70.8}  & 240.1 &832.2  & 328.4 & 924.0 & 825.5 \\
    \emph{suzie}& PSNR & 7.9820 & 28.6866 & 28.0657& 34.3378 &34.5330  &{36.3743}&\textbf{37.5446}   \\  
       & SSIM & 0.0226 &0.8152  & 0.7780 & 0.9116 & 0.9090 &{0.9415}  &\textbf{0.9524}  \\ 
        & TIME(s)&  & \textbf{70.1} & 240.5 & 802.0 & 292.2 & 910.3 &809.7  \\
    \hline 
  \end{tabular}
\end{table}

\begin{figure}[!ht]
\setlength\tabcolsep{0.5pt}
\centering
\begin{tabular}{cccccc}
\includegraphics[width=0.25\textwidth]{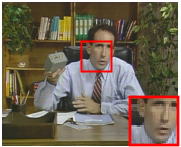}&
\includegraphics[width=0.25\textwidth]{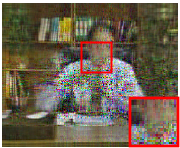}&
\includegraphics[width=0.25\textwidth]{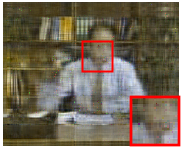}&
\includegraphics[width=0.25\textwidth]{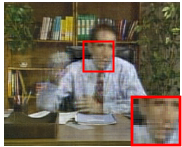}&\\
\footnotesize{Original image} & \footnotesize{TMAC}   &\footnotesize{TRLFR} &\footnotesize{TRGFR}  \\
\includegraphics[width=0.25\textwidth]{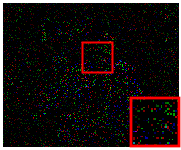}&
\includegraphics[width=0.25\textwidth]{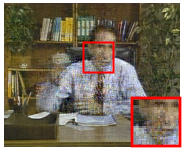}&
\includegraphics[width=0.25\textwidth]{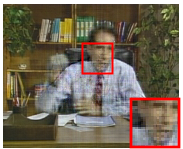}&
\includegraphics[width=0.25\textwidth]{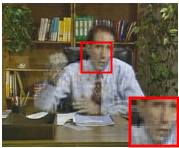}&\\
\footnotesize{Observed} & \footnotesize{FCTN}   & \footnotesize{FCTNFR} & \footnotesize{AFCTNLR}\\
\includegraphics[width=0.25\textwidth]{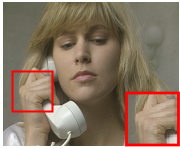}&
\includegraphics[width=0.25\textwidth]{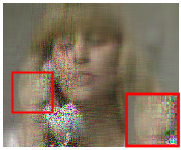}&
\includegraphics[width=0.25\textwidth]{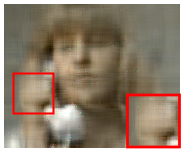}&
\includegraphics[width=0.25\textwidth]{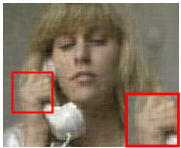}&\\
\footnotesize{Original image} & \footnotesize{TMAC}   &\footnotesize{TRLFR} &\footnotesize{TRGFR}  \\
\includegraphics[width=0.25\textwidth]{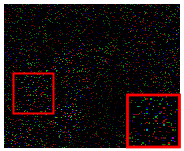}&
\includegraphics[width=0.25\textwidth]{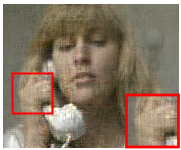}&
\includegraphics[width=0.25\textwidth]{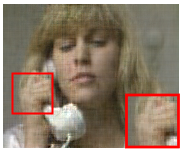}&
\includegraphics[width=0.25\textwidth]{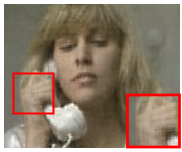}&\\
\footnotesize{Observed} & \footnotesize{FCTN}   & \footnotesize{FCTNFR} & \footnotesize{AFCTNLR}
\end{tabular}

\caption{Visualization of two color videos at a sampling rate (SR) of 5\%. The first two rows correspond to the first frame of the old color video \emph{salesman}, and the last two rows correspond to the first frame of the old color video \emph{suzie}.}\label{fig_7}
\end{figure} 

In the latter part of this subsection, we test three relatively new color videos, namely \emph{ukulele}\footnote{\url{https://www.pexels.com/video/serene-lady-playing-ukulele-in-scenic-hills-28769563/}}, \emph{flowers}\footnote{\url{https://www.pexels.com/video/close-up-of-blooming-cowslip-flowers-in-spring-29498903/}}, and \emph{train}\footnote{\url{https://www.pexels.com/video/stunning-aerial-view-of-swiss-lake-with-train-32781694/}}. Among them, the size of the \emph{ukulele} is 500 $\times$ 500 $\times $ 3 $\times$ 24 , the size of \emph{flowers} is 500 $\times$ 550 $\times $ 3 $\times$ 24, and the size of \emph{train} is 400 $\times$ 600 $\times $ 3 $\times$ 24. The parameter setting rules in this experiment are consistent with those in the preceding subsection.


In Tables \ref{tab_6}–\ref{tab_8}, we report the PSNR, SSIM, and running time (in seconds) of the algorithms on three larger color videos after 500 iterations, with the best results highlighted in bold. Consistent with the reconstruction results on the old color videos, our proposed algorithm still demonstrates superior performance on the new color videos, achieving the best PSNR and SSIM values compared with the competing methods. Moreover, our method shortens the running time by at least 24.9\% compared to the FCTNFR algorithm. To more clearly demonstrate the reconstructed images, we highlight the first frame of the color videos \emph{ukulele} and \emph{flowers } recovered by our algorithm at a 5\% sampling rate in Figure (\ref{fig_8}), and simultaneously present the recovery results of other algorithms for comparison. From the presented images, we can conclude that our proposed AFCTNLR algorithm achieves the best reconstruction results, as evidenced by the lower noise levels in the images it reconstructs compared with those from other algorithms, indicating that our method can preserve local details while reconstructing the overall image.

\begin{table}[!ht]  
  \centering
  \fontsize{9}{10}\selectfont
  \setlength{\abovecaptionskip}{0pt}
\setlength{\belowcaptionskip}{0pt}
  \caption{The performance of different algorithms is compared on video under SR = 5\%.}\label{tab_6} 
  \begin{tabular}{ccccccccc}  
    \hline  
    Video & Index & Observed & TMAC & TRLFR & TRGFR  & FCTN & FCTNFR & AFCTNLR  \\  
    \hline
     \emph{ukulele} & PSNR & 4.5272 & 30.0350 & 26.0787 &31.6583  &33.0136  &35.6269  &\textbf{36.4688}  \\  
          & SSIM &0.0079  &0.8062  &0.7440  &0.8616  &0.8902  &0.9408  &\textbf{0.9502}  \\ 
          & TIME(s) &  & \textbf{143.4} &652.2  &864.0  &724.2  &1505.8  &1130.6  \\
    \emph{flowers} & PSNR &8.8583  &31.4656  &27.2622  &33.5146  &36.1852  &{39.7142}  &\textbf{41.1405}  \\  
          & SSIM & 0.0865 &0.8377  &0.7511  &0.8808  &0.9110  &{0.9609}  &\textbf{0.9689}  \\ 
          & TIME(s) &  &\textbf{155.2}  &780.3  &1061.5  & 864.3 &2073.4  &1348.3  \\
    \emph{train} & PSNR &5.6282  &26.3704  &23.1579  &28.5771  &29.8533  &{32.6682}  & \textbf{33.4843} \\  
          & SSIM & 0.0098 & 0.7669 &0.6657  &0.8494  &0.8724  &{0.9347}  &\textbf{0.9439}  \\ 
          & TIME(s) &  &\textbf{134.9}  &667.7  &888.3  &726.7  &1851.6  &1192.5  \\
    \hline 
  \end{tabular}
\end{table}

\begin{table}[!ht]  
  \centering
  \fontsize{9}{10}\selectfont
  \setlength{\abovecaptionskip}{0pt}
\setlength{\belowcaptionskip}{0pt}
  \caption{The performance of different algorithms is compared on video under SR = 10\%.} \label{tab_7}  
  \begin{tabular}{ccccccccc}  
    \hline  
    Video & Index & Observed & TMAC & TRLFR & TRGFR  & FCTN & FCTNFR & AFCTNLR  \\  
    \hline
     \emph{ukulele} & PSNR & 4.7615 &31.7415  &27.6645  &32.7133  &35.3909  &38.2458  &\textbf{39.1923}  \\  
          & SSIM & 0.0117 & 0.8544 & 0.7727 & 0.8868 & 0.9344 & 0.9645 &\textbf{0.9704}  \\ 
          & TIME(s) &  & \textbf{146.7} & 612.8 &839.2  &914.4  &1859.0  &1189.2  \\
    \emph{flowers} & PSNR &9.0927  &33.0762  &27.9687  &34.4993  &38.7543  &{43.2623}  &\textbf{44.5751}  \\  
          & SSIM &0.0968  & 0.8715 & 0.7675 &0.8989  &0.9423  &{0.9788}  &\textbf{0.9830}  \\ 
          & TIME(s) &  & \textbf{152.3} & 712.4 &1080.0  & 966.1 &1922.2  &1276.8  \\
    \emph{train} & PSNR & 5.8628 & 28.0022 &24.3947  &29.4771  &32.5277  &{37.3527}  &\textbf{38.2206}  \\  
          & SSIM &0.0162  &0.8193  &0.7165  &0.8726  &0.9203  &{0.9648}  &\textbf{0.9744}  \\ 
          & TIME(s) &  & \textbf{127.3} &596.5  &780.3  &699.1  &1541.5  &1034.3  \\
    \hline 
  \end{tabular}
\end{table}

\begin{table}[!ht]  
  \centering
  \fontsize{9}{10}\selectfont
  \setlength{\abovecaptionskip}{0pt}
\setlength{\belowcaptionskip}{0pt}
  \caption{The performance of different algorithms is compared on video under SR = 20\%.}  \label{tab_8} 
  \begin{tabular}{ccccccccc}  
    \hline  
    Video & Index & Observed & TMAC & TRLFR & TRGFR  & FCTN & FCTNFR & AFCTNLR  \\  
    \hline
     \emph{ukulele} & PSNR & 5.2728 &32.5829  &28.4432  &33.5754  &37.4883  &40.8533  &\textbf{41.6743}  \\  
          & SSIM &0.0182  & 0.8780 & 0.7959 & 0.9038 &0.9561  & 0.9779 &\textbf{0.9813}  \\ 
          & TIME(s) &  & \textbf{178.3} &703.3  &1017.3  &1096.8  &2114.7  &1357.7  \\
    \emph{flowers} & PSNR &9.6045  &33.9615  &28.6585  &35.6328  &41.6912  &{46.0552}  &\textbf{47.3103}  \\  
          & SSIM & 0.1128 & 0.8916 &0.7751  &0.9168  &0.9659  &{0.9874}  &\textbf{0.9901}  \\ 
          & TIME(s) &  & \textbf{188.8} &796.5  &1140.1  &1119.1  &2288.4  &1458.2  \\
    \emph{train} & PSNR & 6.3742 &28.8667  & 25.1514 &30.4284  &34.6512  &{41.1696}  &\textbf{41.5797}  \\  
          & SSIM & 0.0288 &0.8471  &0.7465  &0.8945  &0.9430  &{0.9837}  &\textbf{0.9861}  \\ 
          & TIME(s) &  &\textbf{177.0}  &635.2  &857.8  &813.8  &1794.1  &1153.3  \\
    \hline 
  \end{tabular}
\end{table}

 
  
  
  
    
  
    
  
    
  
    

\begin{figure}[!ht]
\setlength\tabcolsep{0.5pt}
\centering
\begin{tabular}{cccccc}
\includegraphics[width=0.25\textwidth]{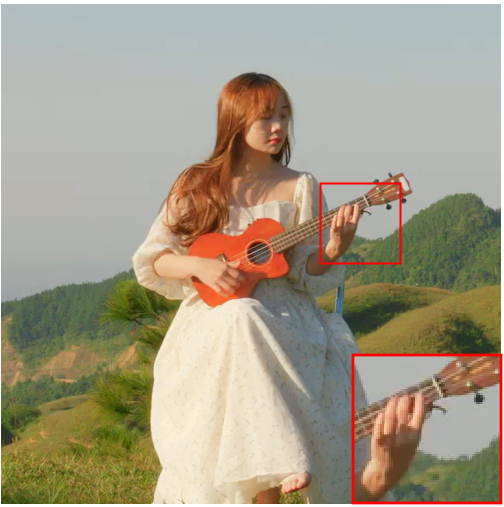}&
\includegraphics[width=0.25\textwidth]{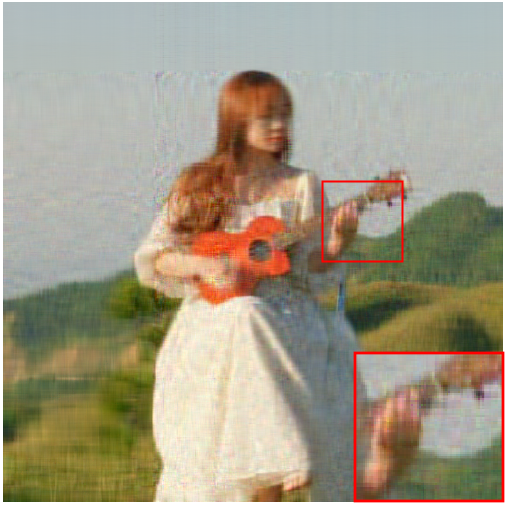}&
\includegraphics[width=0.25\textwidth]{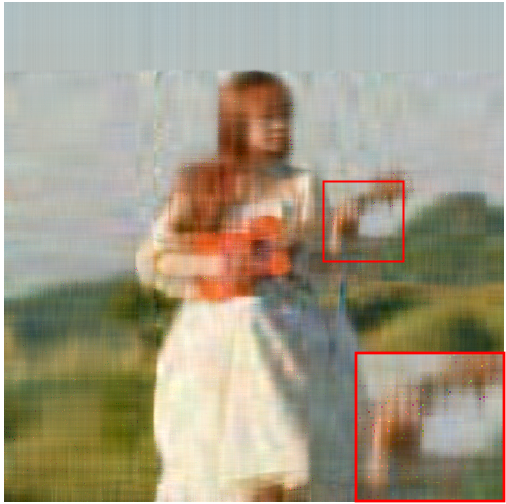}&
\includegraphics[width=0.25\textwidth]{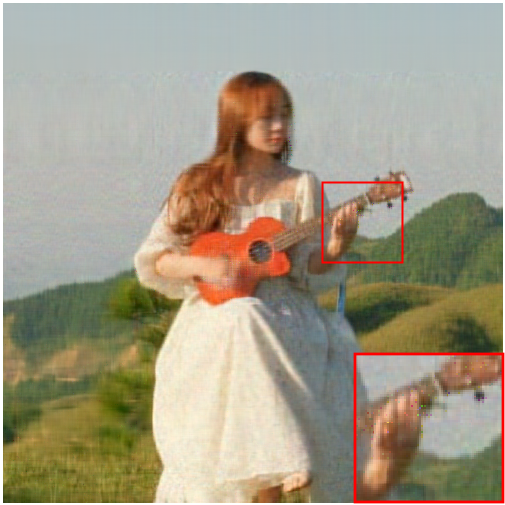}&\\
\footnotesize{Original image} & \footnotesize{TMAC}   &\footnotesize{TRLFR} &\footnotesize{TRGFR}  \\
\includegraphics[width=0.25\textwidth]{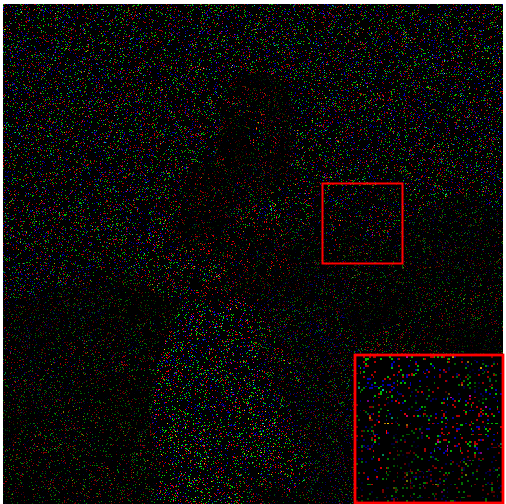}&
\includegraphics[width=0.25\textwidth]{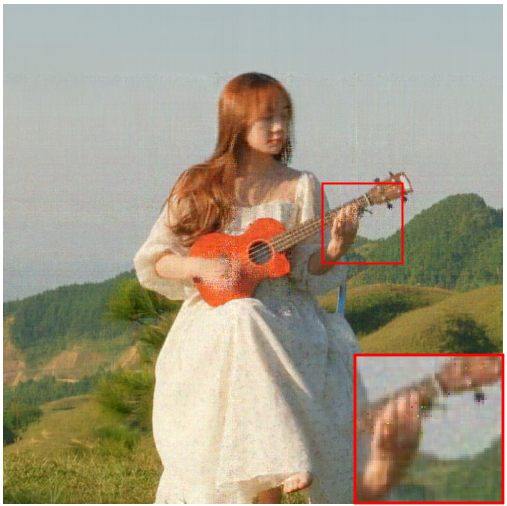}&
\includegraphics[width=0.25\textwidth]{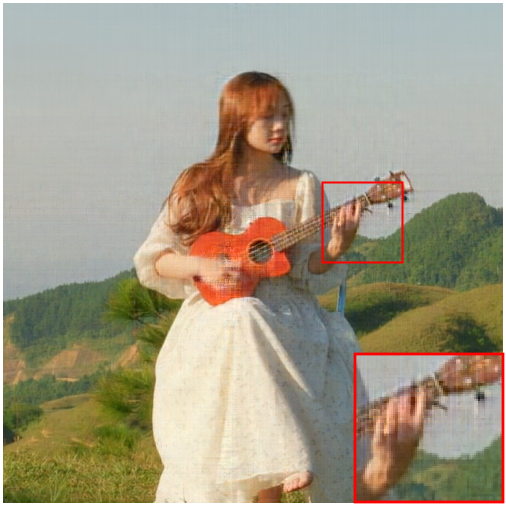}&
\includegraphics[width=0.25\textwidth]{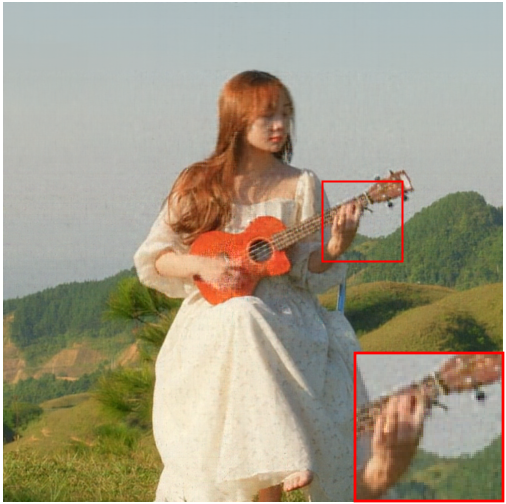}&\\
\footnotesize{Observed} & \footnotesize{FCTN}   & \footnotesize{FCTNFR} & \footnotesize{AFCTNLR}\\
\includegraphics[width=0.25\textwidth]{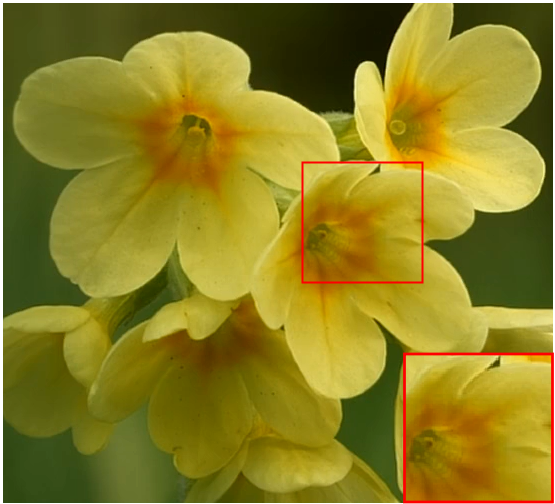}&
\includegraphics[width=0.25\textwidth]{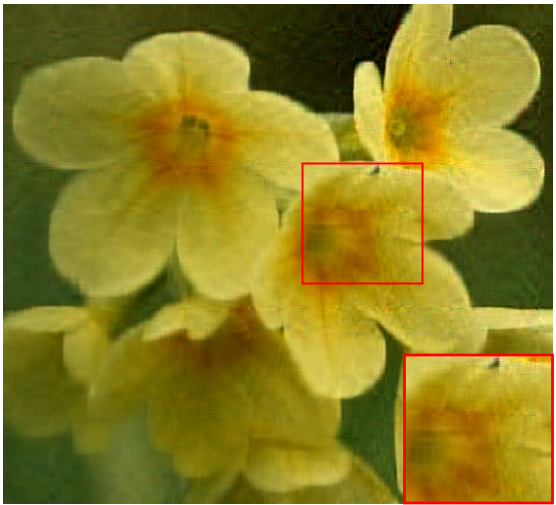}&
\includegraphics[width=0.25\textwidth]{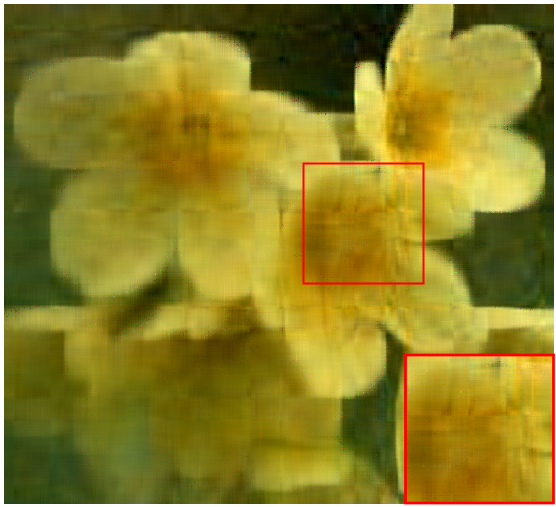}&
\includegraphics[width=0.25\textwidth]{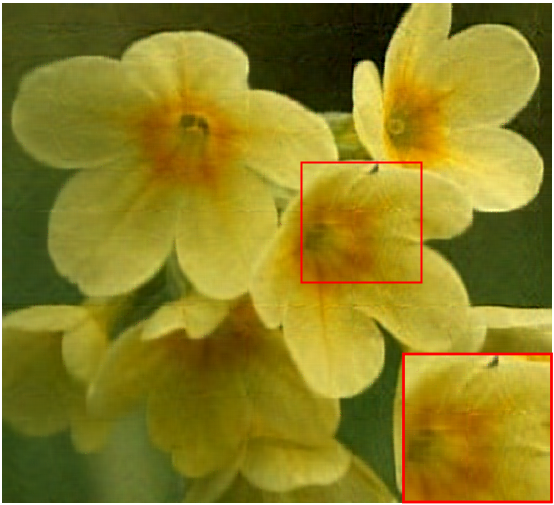}&\\
\footnotesize{Original image} & \footnotesize{TMAC}   &\footnotesize{TRLFR} &\footnotesize{TRGFR}  \\
\includegraphics[width=0.25\textwidth]{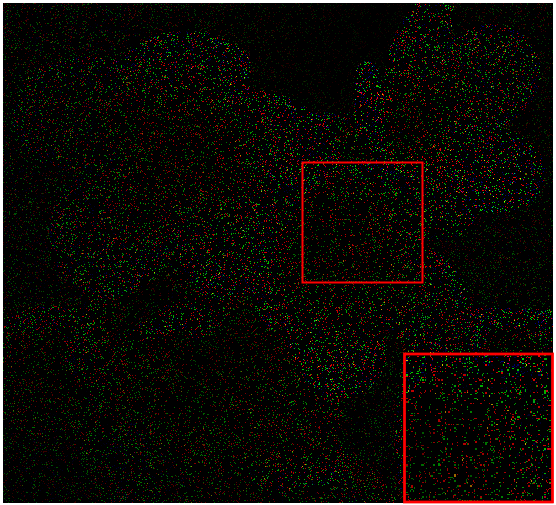}&
\includegraphics[width=0.25\textwidth]{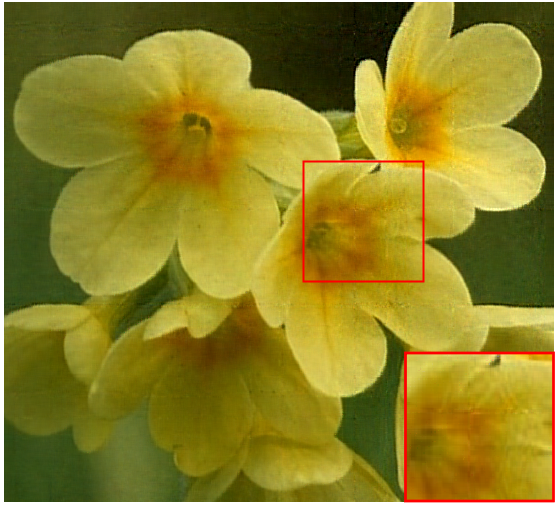}&
\includegraphics[width=0.25\textwidth]{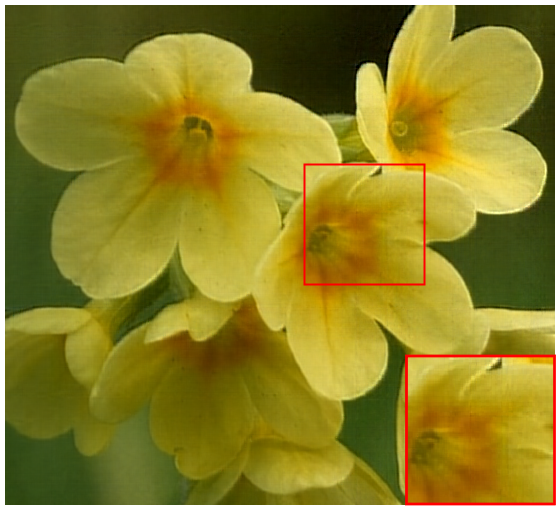}&
\includegraphics[width=0.25\textwidth]{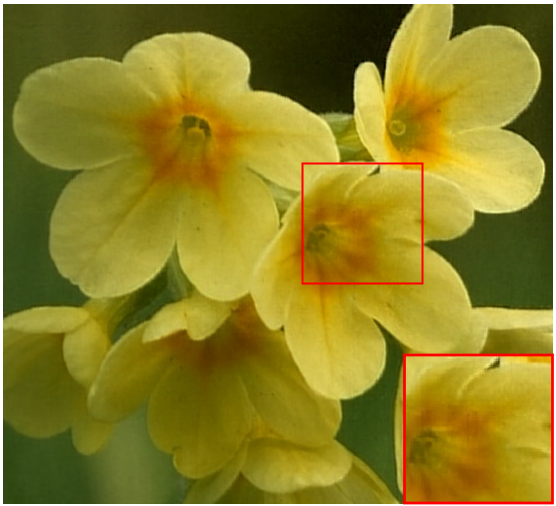}&\\
\footnotesize{Observed} & \footnotesize{FCTN}   & \footnotesize{FCTNFR} & \footnotesize{AFCTNLR}
\end{tabular}

\caption{Visualization of two color videos at a sampling rate (SR) of 5\%. The first two rows correspond to the first frame of the old color video \emph{ukulele}, and the last two rows correspond to the first frame of the old color video \emph{flowers}.}\label{fig_8}
\end{figure} 

\subsection{Multi-temporal hyperspectral image}
In this subsection, two multi-temporal hyperspectral images\footnote{\url{https://geodes.cnes.fr/}}, \emph{Brazil} and \emph{Morocco}, each of size $300\times300\times4\times5$ (spatial height $\times$ spatial width $\times$ band $\times$ time), which are cropped subsets of the original remote sensing images, are employed to validate the proposed method. Similar to the previous subsection, we consider the completion results under sampling rates of 5\%, 10\%.

To enhance the algorithm's performance on multi-temporal hyperspectral datasets, we introduce an asymmetric extrapolation acceleration mechanism to improve its computational efficiency.

\emph{asymmetric extrapolation \cite{xie2025efficient}}: This extrapolation is a straightforward generalization of momentum extrapolation, and its specific form is as follows 
\begin{equation}
\mathcal{A}_k^{(t+1)}=\mathcal{A}_k^{(t+1)}+\alpha(\mathcal{A}_k^{(t+1)}-\beta\mathcal{A}_k^{(t)}), 
\end{equation}
where $\alpha\in(0,1)$ and $\beta\in (0.2,0.8)$ represent tunable hyperparameters. The core idea of this extrapolation method is that when consecutive iterative solutions become overly similar, a momentum term related to the current iteration is introduced to generate solution-dependent perturbations, thereby the performance of the proposed algorithm. Unlike conventional momentum-based approaches, in which the extrapolation effect diminishes sharply when the solutions are too close, the proposed asymmetric momentum can introduce appropriate perturbations to ensure the acceleration effect of the algorithm.

\emph{Parameter setting}: 
We set the parameters of $\delta$ and $\lambda$ according to the same rules as those used in the subsection \ref{sub_4.1}. Meanwhile, the extrapolation coefficients $\alpha$ and $\beta$ are both set to 0.5. Regarding the setting of the maximal FCTN-rank $\mathbf{r}^{\text{max}}_{\text{FCTN}}$, we also follow the configuration rules specified in reference \cite{Zheng2022}.

\begin{table}[!ht]  
  \centering
  \fontsize{9}{10}\selectfont
  \setlength{\abovecaptionskip}{0pt}
\setlength{\belowcaptionskip}{0pt}
  \caption{Algorithm performance on multi-temporal hyperspectral images at SR = 5\%.}  \label{tab_9} 
  \begin{tabular}{ccccccccc}  
    \hline  
    Video & Index & Observed & TMAC & TRLFR & TRGFR  & FCTN & FCTNFR & AFCTNLR  \\  
    \hline
      \emph{Brazil} & PSNR & 19.0645 &27.4528  &32.5700 &36.0993  &32.5585  &37.0726 &\textbf{37.4943}  \\  
          & SSIM &0.0777  & 0.5304 & 0.7770 & 0.8656 &0.6933  & 0.8865 &\textbf{0.9011}  \\ 
          & TIME(s) &  & \textbf{30.4} &128.6  &212.7 &99.2  &154.9  &106.8 \\
    \emph{Morocco} & PSNR &14.6201 &24.4550  &33.0221  &37.1481  &32.0970  &{37.3686}  &\textbf{39.3587}  \\  
          & SSIM & 0.0567& 0.4256 &0.8244  &0.9101  &0.7209 &{0.9185}  &\textbf{0.9477}  \\ 
          & TIME(s) &  & \textbf{23.5} &132.2  &210.3 &241.0 &391.5  &264.3  \\
    \hline 
  \end{tabular}
\end{table}

\begin{table}[!ht] 
\vspace{-10pt}
  \centering
  \fontsize{9}{10}\selectfont
  \setlength{\abovecaptionskip}{0pt}
\setlength{\belowcaptionskip}{0pt}
  \caption{Algorithm performance on multi-temporal hyperspectral images at SR = 10\%.}  \label{tab_10} 
  \begin{tabular}{ccccccccc}  
    \hline  
    Video & Index & Observed & TMAC & TRLFR & TRGFR  & FCTN & FCTNFR & AFCTNLR  \\  
    \hline
      \emph{Brazil} & PSNR &19.8444 &34.2218  &34.2890  &37.8711  &36.1512  &{39.3744}  &\textbf{39.9345}  \\  
          & SSIM & 0.1167 & 0.7610 &0.8353  &0.9020  &0.8388 &{0.9240}  &\textbf{0.9360}  \\ 
          & TIME(s) &  & \textbf{28.4} &124.3  &208.4 &91.4 &119.9  &90.7 \\
    \emph{Morocco} & PSNR &14.8547  &34.3522  &34.7481 &39.1582  &38.5519 &{40.9691}  &\textbf{41.7584}  \\  
          & SSIM & 0.0752 & 0.8050 &0.8651  &0.9399  &0.9132  &{0.9596}  &\textbf{0.9668}  \\ 
          & TIME(s) &  & \textbf{26.8} &150.3  &220.6  &254.7  &429.0  &284.8  \\
    \hline 
  \end{tabular}
\end{table}

\begin{figure}[!ht]
\vspace{-5pt}
\setlength\tabcolsep{0.5pt}
\centering
\begin{tabular}{cccccc}
\includegraphics[width=0.25\textwidth]{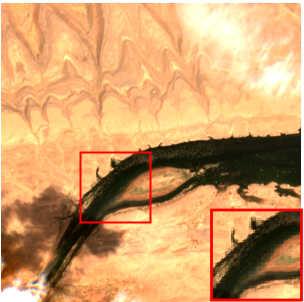}&
\includegraphics[width=0.25\textwidth]{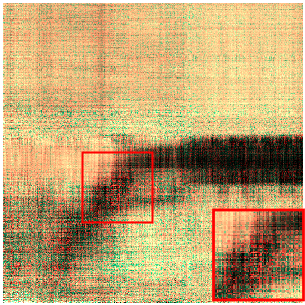}&
\includegraphics[width=0.25\textwidth]{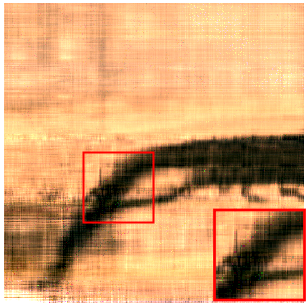}&
\includegraphics[width=0.25\textwidth]{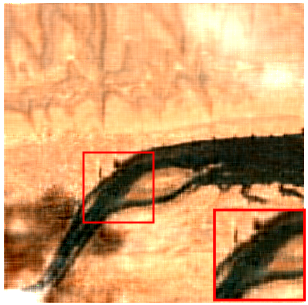}&\\
\footnotesize{Original image} & \footnotesize{TMAC}   &\footnotesize{TRLFR} &\footnotesize{TRGFR}  \\
\includegraphics[width=0.25\textwidth]{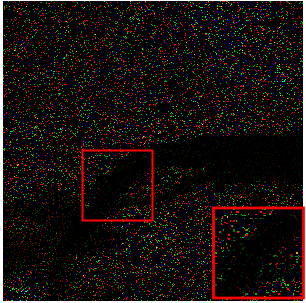}&
\includegraphics[width=0.25\textwidth]{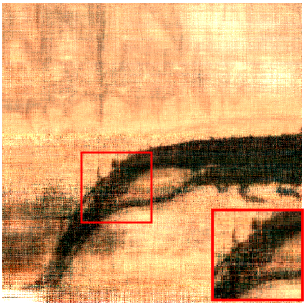}&
\includegraphics[width=0.25\textwidth]{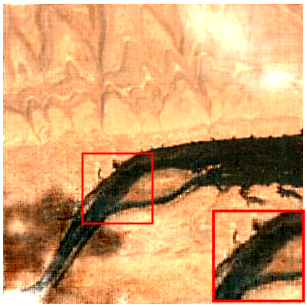}&
\includegraphics[width=0.25\textwidth]{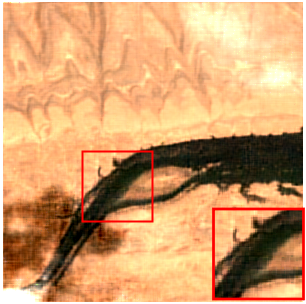}&\\
\footnotesize{Observed} & \footnotesize{FCTN}   & \footnotesize{FCTNFR} & \footnotesize{AFCTNLR}\\
\end{tabular}

\caption{The visualization of the multi-temporal hyperspectral image \emph{Morocco} at SR = 5\% is presented, with bands 3, 2, and 1 at the third time point mapped to the red, green, and blue channels to form an RGB image.
}\label{fig_9}
\end{figure} 

In Tables \ref{tab_9}–\ref{tab_10}, we present the PSNR, SSIM, and running time (in seconds) of our evaluated algorithms on multi-temporal hyperspectral images after 500 iterations, with the best results highlighted in bold. We perform completion with $R^{max}_{1,3}=3$ on dataset  \emph{Brazil} and $R^{max}_{1,3}=4$ on dataset \emph{Morocco}, so it can be seen from the tables that the last three algorithms exhibit significant differences in running time on these two datasets. As indicated in the table, the proposed AFCTNLR algorithm attains the highest PSNR and SSIM values among all competing methods, while its runtime is at least 23\% lower than that of FCTNFR, which achieves the second-best PSNR and SSIM. In Figure \ref{fig_9}, we present the spatial images at the third time point of the multi-temporal hyperspectral image \emph{Morocco}, reconstructed from the B4, B3, and B2 bands of the remote sensing images. We observed that the proposed AFCTNLR method exhibits a clear superiority over alternative algorithms, achieving more accurate image reconstruction and delivering better performance in detail preservation.

 
  
  

    
  
    

\section{Conclusion}\label{sect_5}
In this paper, we proposed a novel tensor completion model that combines trace regularization with FCTN decomposition. The proposed method leveraged a periodically modified negative laplacian together with FCTN factor matrices under a trace function framework. By constraining the second-order derivatives of discrete factors, it strengthened their smoothness and indirectly ensured a more continuous reconstructed tensor. We developed a PAM-based algorithm to solve the proposed model and proved its convergence theoretically. Meanwhile, we designed an intermediate tensor reuse mechanism, which effectively reduced the algorithm’s running time by 10\%–30\% without compromising the quality of image reconstruction, while complexity analysis confirmed its effectiveness in lowering computational costs. Comprehensive experiments revealed that the proposed AFCTNLR algorithm surpassed competing approaches in terms of reconstruction performance.


\section*{Acknowledgment}
\noindent{\bf Funding:} This research is supported by the National Natural Science Foundation of China (NSFC) grants 92473208, 12401415, the Key Program of National Natural Science of China 12331011, the 111 Project (No. D23017),  the Natural Science Foundation of Hunan Province (No. 2025JJ60009). 

\noindent{\bf Data Availability:} Enquiries about data/code availability should be directed to the authors.

\noindent{\bf Competing interests:} The authors have no competing interests to declare that are relevant to the content of this paper.

   

\appendix

\bibliographystyle{plain}	
\bibliography{refs}

\end{document}